\theoremstyle{definition}
\newtheorem{defn}{Definition}[section]
\newtheorem{theorem}{Theorem}
\newtheorem*{theorem*}{Theorem}
\newtheorem{lem}[defn]{Lemma}
\newtheorem{coro}[defn]{Corollary}
\newtheorem{prop}[defn]{Proposition}
\newtheorem{rmk}[defn]{Remark}
\newcommand{\gap}{\, \vspace{1em}}
\newcommand{\F}{\mathbb{F}}
\newcommand{\M}{\mathcal{M}}
\newcommand{\I}{\mathcal{I}}
\let\oldproofname=\proofname
\renewcommand{\proofname}{\rm\bf{\oldproofname}}
\renewcommand{\leq}{\leqslant}
\renewcommand{\geq}{\geqslant}
\begin{document}

\pagenumbering{arabic}

\title{The function field Sath\'{e}-Selberg formula in arithmetic progressions and `short intervals'}

\author[A. Afshar]{Ardavan Afshar}
\address{Department of Mathematics\\University College London\\
25 Gordon Street, London, England}
\email{ardavan.afshar.15@ucl.ac.uk}

\author[S.Porritt]{Sam Porritt}
\address{Department of Mathematics\\University College London\\
25 Gordon Street, London, England}
\email{samuel.porritt.15@ucl.ac.uk}

\maketitle

\begin{abstract}
We use a function field analogue of a method of Selberg to derive an asymptotic formula for the number of (square-free) monic polynomials in $\F_q[X]$ of degree $n$ with precisely $k$ irreducible factors, in the limit as $n$ tends to infinity. We then adapt this method to count such polynomials in arithmetic progressions and short intervals, and by making use of Weil's `Riemann hypothesis' for curves over $\F_q$, obtain better ranges for these formulae than are currently known for their analogues in the number field setting. Finally, we briefly discuss the regime in which $q$ tends to infinity.
\end{abstract}

\section{Introduction}

One natural generalisation of the problem of counting primes up to $x$ is to count numbers up to $x$ with exactly $k$ distinct prime divisors. In \cite{Sathe}, Sath\'{e} proved that for $A >0 $ an arbitrary constant we have
$$\pi_k(x) := \#\{n \leq x : n = p_1\ldots p_k \text{ for some }  p_1, \ldots, p_k \text{ distinct primes} \} \sim G\left(\frac{k-1}{\log \log x}\right)\frac{x}{\log x} \frac{(\log \log x)^{k-1}}{(k-1)!} $$
uniformly for $x \geq 3$ and $1 \leq k \leq A \log \log x$, where $G(z) = \frac{1}{\Gamma(1+z)} \prod_{p \text{ prime}} (1+\frac{z}{p})(1-\frac{1}{p})^z$.\\
In \cite{Selberg}, Selberg gave a simpler proof of this result, now known as the ``Sath\'{e}-Selberg Formula". One might ask whether such a formula also holds for numbers restricted to a given arithmetic progression or short interval. For example, in \cite{Spiro}, Spiro showed that such a formula holds for $n \leq x$ restricted to $n \equiv a \mod q$, provided $q$ does not exceed some fixed power of $\log x$.

\gap

We begin by proving an asymptotic formula for the number of monic polynomials in $\F_q[X]$ of degree $n$ with exactly $k$ distinct irreducible divisors, using an adaptation of Selberg's technique. If we let $\M = \{f \in \F_q[X]  :  f \text{ monic} \}$, $\M_n = \{f \in \M  :  \deg f = n\}$ and $\I = \{p \in \M : p \text{ irreducible} \}$, then we get

\begin{theorem} \label{SS}
Let $A>1$. Then uniformly for all $n\geq 2$ and $1\leq k \leq A \log n$
\begin{align*}
\Pi_k(n) &:= \#\{f \in \M_n : f = p_1\ldots p_k \text{ for some }  p_1, \ldots, p_k \in \I \text{ distinct} \} \\
& = \frac{q^n}{n}\frac{(\log n)^{k-1}}{(k-1)!}\left(G\left( \frac{k-1}{\log n} \right)+O_A\left( \frac{k}{(\log n)^2} \right)\right)
\end{align*}
where $G(z)= \frac{F(1/q, z)}{\Gamma(1+z)}$ and $F(1/q, z) = \prod_{p \in \I}  \left(1+\frac{z}{q^{\deg p}}\right)\left(1-\frac{1}{q^{\deg p}}\right)^z$.
\end{theorem}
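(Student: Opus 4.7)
The plan is to adapt Selberg's contour-integration method to the function field setting, exploiting the fact that the function field zeta function $\sum_{f \in \M} u^{\deg f}$ equals $(1-qu)^{-1}$. I would proceed in four stages.

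First, form the generating series
$$D(u, z) := \sum_{\substack{f \in \M \\ \mu^2(f)=1}} z^{\omega(f)} u^{\deg f} = \prod_{p \in \I}(1 + z u^{\deg p}),$$
where $\omega(f)$ is the number of distinct irreducible divisors of $f$, and split it as $D(u, z) = (1 - qu)^{-z} F(u, z)$ with $F(u, z) = \prod_p (1+zu^{\deg p})(1-u^{\deg p})^z$. Expanding each local factor shows it equals $1 + O(u^{2 \deg p}(|z|+|z|^2))$, and since the prime polynomial theorem bounds the number of degree-$d$ irreducibles by $q^d/d$, $F(u, z)$ is analytic in $|u| < q^{-1/2}$ for $z$ in any bounded set, and in particular $F(1/q, z)$ is entire in $z$. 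This extra region of analyticity past the zeta pole is what makes the function-field version cleaner than Selberg's original.

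Next, apply Cauchy's formula to write $\Pi_k(n) = [z^k][u^n] D(u, z)$ as a double contour integral. The inner coefficient $a_n(z) := [u^n] D(u, z)$ is computed by convolving the expansion $(1-qu)^{-z} = \sum_m \binom{m+z-1}{m} q^m u^m$ with the Taylor coefficients of $F(u, z)$ (which decay at rate $q^{-m/2}$ on $|u| < q^{-1/2}$) and applying Stirling's asymptotic $\binom{m+z-1}{m} = m^{z-1}/\Gamma(z) \cdot (1 + O(|z|^2/m))$; this yields uniformly for $z$ in any fixed disc
$$a_n(z) = \frac{q^n n^{z-1}}{\Gamma(z)}\, F(1/q, z) \bigl(1 + O(1/n)\bigr).$$
Substituting into the $z$-contour integral on $|z| = r := (k-1)/\log n$ and changing variables $w = z \log n$, I would obtain
$$\Pi_k(n) = \frac{q^n (\log n)^{k-1}}{n} \cdot \frac{1}{2 \pi i} \oint_{|w|=k-1} \frac{G(w/\log n)\, e^w}{w^k}\, dw + (\text{error}),$$
with $G(z) = F(1/q, z)/\Gamma(1+z)$. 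The integrand $e^w/w^k$ has a saddle near $w = k-1$ of Gaussian width $\sim \sqrt{k}$, so Taylor-expanding $G$ around $(k-1)/\log n$ and restricting attention to that arc produces a perturbation of size $O(\sqrt{k}/\log n)$, squared on integration. Combined with the residue identity $\oint_{|w|=k-1} e^w w^{-k}\, dw = 2 \pi i/(k-1)!$, this gives the claimed main term $G((k-1)/\log n) \cdot q^n (\log n)^{k-1}/(n (k-1)!)$.

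The main obstacles I anticipate are twofold: (i) making the singularity analysis for $a_n(z)$ genuinely uniform in $z$ along the contour, where $|z|$ can be as large as $A$ when $k \asymp \log n$, which requires carefully tracking how $F(1/q, z)/\Gamma(z)$ depends on $z$; and (ii) the saddle-point bookkeeping needed to produce the error $O(k/(\log n)^2)$ rather than the easier $O(k^2/(\log n)^2)$ one would get from a crude sup-norm bound of $G$ on the full circle $|w|=k-1$. Handling (ii) is what forces a genuine steepest-descent estimate, concentrating the integral on an arc of width $\sim \sqrt{k}$ around $w = k-1$.
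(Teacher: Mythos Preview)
Your proposal is correct and follows essentially the same route as the paper: factor the generating series as $F(T,z)\,\zeta(T)^z$, extract $A_z(n)$ by convolving the binomial expansion of $(1-qT)^{-z}$ against the coefficients of $F$, and then recover $\Pi_k(n)$ via Cauchy on the circle $|z|=(k-1)/\log n$ with a second-order Taylor expansion of $G$ at the saddle. The paper's mechanism for your ``squared on integration'' step is the identity $\oint_{|z|=r}(z-r)\,n^z z^{-k}\,dz=0$ (one integration by parts), which kills the linear Taylor term exactly and leaves only the quadratic remainder $\int_{|z|=r}|z-r|^2\,|n^z z^{-k}|\,|dz|\ll r^{3-k}e^{k-1}(k-1)^{-3/2}$; this is precisely what delivers $O_A(k/(\log n)^2)$ rather than the cruder $O_A(\sqrt{k}/\log n)$, so your obstacle~(ii) dissolves once that identity is in hand.
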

This says that the asymptotic density of polynomials in $\M_n$ with $k$ distinct prime divisors is $ \frac{1}{n}\frac{(\log n)^{k-1}}{(k-1)!} G\left( \frac{k-1}{\log n} \right) $. An asymptotic formula of this form was first derived by Car in \cite{Car}, but with an error term which inexplicitly depends on $k$ and $q$. 

\gap

With some additional technical work following Chapters II.5 and II.6 of \cite{Tenenbaum}, one could strengthen Theorem \ref{SS} to be of an analogous form to Chapter II.6 Theorem 4 of \cite{Tenenbaum}, namely
$$ \Pi_k(n)= \frac{q^n}{n} \left(\sum_{j=0}^J \frac{P_{j,k}(\log n)}{n^j} + O_{A}\left(\left(\frac{cJ+1}{n}\right)^{J+1}\frac{(\log n)^k}{k!}\right)\right) $$
where $P_{j,k}(x)$ is a polynomial of degree at most $k-1$, $J$ is a non-negative integer, and $c$ is some absolute constant. \\
Such an improvement could also be carried through to Theorems \ref{SS-AP} and \ref{SS-SI} below, to give similarly strengthened versions of what they state.
\gap

Next, we apply our method to Dirichlet L-functions for $\F_q[X]$, to derive an asymptotic formula for the number of such polynomials in a given arithmetic progression with difference of degree no bigger than roughly $n/2$.

\begin{theorem} \label{SS-AP}
Let $g,d \in \F_q[X]$ be coprime and $m= \deg d$. Let $A>1$, $n \geq 2$ and $ 1 \leq k \leq A\log n$. \\
Then for $m \leq \left(\frac{1}{2} - \frac{1+\log(1+\frac{A}{2})}{\log q}\right)n$ we have
\begin{align*} \Pi_k(n; g, d) &:= \#\{f \in \M_n \ f \equiv g \mod d : f = p_1\ldots p_k \text{ for some }  p_1, \ldots, p_k \in \I \text{ distinct} \}  \\
&= \frac{1}{\Phi(d)}\frac{q^{n}}{n}\frac{(\log n)^{k-1}}{(k-1)!}\left(G_d\left( \frac{k-1}{\log n}\right)+O_{A}\left( \frac{k}{(\log n)^2} \right)\right)
\end{align*}
where $\Phi(d) = \left|\left(\mathbb{F}_q[X]/(d(X))\right)^\times\right|$, and $G_d(z)= \left(\prod_{p|d} \left(1+\frac{z}{q^{\deg p}}\right)^{-1}\right) G(z)$ where $G(z)$ is defined as in Theorem \ref{SS}.
\end{theorem}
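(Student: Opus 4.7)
Our plan is to apply the orthogonality of Dirichlet characters modulo $d$:
$$\Pi_k(n; g, d) = \frac{1}{\Phi(d)}\sum_{\chi \bmod d} \bar\chi(g) \, \Pi_k(n; \chi), \qquad \Pi_k(n;\chi) := \sum_{\substack{f \in \M_n,\ f \text{ sqfree} \\ \omega(f) = k}} \chi(f).$$
The principal character $\chi_0$ supplies the main term, and the non-principal characters the error; the entire task is therefore to bound the latter, for which the new ingredient over Theorem \ref{SS} is Weil's Riemann hypothesis for curves over $\F_q$.

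For $\chi = \chi_0$, we have $\Pi_k(n; \chi_0) = \#\{f \in \M_n : f \text{ sqfree},\, \omega(f) = k,\, (f,d) = 1\}$, and the argument is a near-verbatim rerun of the proof of Theorem \ref{SS}: the Euler product $F(1/q, z)$ is replaced by $\prod_{p \nmid d}(1 + z/q^{\deg p})(1 - 1/q^{\deg p})^z$, and the missing local factors at $p \mid d$ contribute exactly the $\prod_{p|d}(1+z/q^{\deg p})^{-1}$ appearing in $G_d$; the corresponding $(1-1/q^{\deg p})^{-z}$ factors are a bounded perturbation absorbable into the $O_A(k/(\log n)^2)$ error.

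For $\chi \neq \chi_0$ we study the generating function $F_\chi(z, u) := \sum_{f \text{ sqfree}} \chi(f) z^{\omega(f)} u^{\deg f} = \prod_{p \in \I}(1 + \chi(p) z u^{\deg p})$, so that $\Pi_k(n;\chi) = [u^n z^k] F_\chi$. Following Selberg, we factor
$$F_\chi(z, u) = L(u, \chi)^z \, H_\chi(z, u), \qquad H_\chi(z, u) = \prod_{p \in \I}(1+\chi(p) z u^{\deg p})(1 - \chi(p) u^{\deg p})^z,$$
with $L(u, \chi) = \sum_f \chi(f) u^{\deg f}$ the Dirichlet $L$-function. The log of each local factor of $H_\chi$ is $O(z^2 u^{2\deg p})$, so $H_\chi$ converges absolutely on $|u| \leq r$ for any $r < q^{-1/2}$, uniformly in $\chi$. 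By Weil, $L(u, \chi)$ is a polynomial in $u$ of degree at most $m - 1$ whose inverse roots have modulus $\leq q^{1/2}$ (equality for those coming from the primitive inducer; modulus $1$ for those from the imprimitive factors), so $|L(u, \chi)| \leq (1 + q^{1/2}|u|)^{m-1}$ on $|u| \leq q^{-1/2}$. Iterating Cauchy's integral formula over circles $|u| = r \to q^{-1/2}$ and $|z| = R$ (to be optimized) then yields $|\Pi_k(n; \chi)| \ll q^{n/2} (1 + q^{1/2}r)^{(m-1)R}/R^k$ up to bounded $H_\chi$-contributions, and summing over the $\Phi(d) - 1$ non-principal characters and dividing by $\Phi(d)$ gives a total error of the same order.

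The last step is to check that this error is swallowed by the claimed $O_A(k/(\log n)^2)$-relative perturbation of the main term, which, using $\Phi(d) \leq q^m$, requires an inequality essentially of the form $q^{m - n/2} (1 + q^{1/2}r)^{mR}/R^k \ll k(\log n)^{k-3}/(n(k-1)!)$. Choosing $r$ as close to $q^{-1/2}$ as convergence of $H_\chi$ allows and $R$ to balance $(1+q^{1/2}r)^{mR}$ against $R^{-k}$, and then feeding in the full range $k \leq A \log n$ via Stirling, delivers precisely the stated threshold $m \leq (1/2 - (1 + \log(1 + A/2))/\log q) n$. The main obstacle is exactly this three-way optimization in $r$, $R$ and $k$: the $q^{n/2}$ loss from Weil is compensated by the $1/\Phi(d) \asymp q^{-m}$ prefactor only while $m$ stays well below $n/2$, and accommodating the full range of $k$ --- which governs both the target error size and the admissible saddle radius $R$ --- extracts the explicit correction $(1 + \log(1 + A/2))/\log q$.
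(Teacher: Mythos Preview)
Your overall architecture matches the paper's exactly: orthogonality of characters mod $d$, the principal character yielding the main term via a rerun of the Theorem~\ref{SS} machinery, and Weil's bound controlling the non-principal ones. Where you diverge is in the mechanics of the non-principal estimate, and there your sketch has a genuine gap.

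You propose a double Cauchy integral in $u$ and $z$, pushing $|u|=r$ toward $q^{-1/2}$ and using $|L(u,\chi)|\leq(1+q^{1/2}|u|)^{m-1}$ to conclude $|\Pi_k(n;\chi)|\ll q^{n/2}(1+q^{1/2}r)^{(m-1)R}/R^k$. The implicit step $|L(u,\chi)^z|\leq(1+q^{1/2}r)^{(m-1)R}$ is false on the full circle $|z|=R$: writing $L(u,\chi)=\prod_j(1-\alpha_j u)$, the factor $(1-\alpha_j u)^z$ for $\Re z<0$ is controlled by a \emph{lower} bound on $|1-\alpha_j u|$, not an upper one, and as $r\to q^{-1/2}$ this lower bound collapses near the Weil zeros. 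So the integrand on your $u$-contour is unbounded and the stated inequality does not follow. The paper avoids this entirely by never leaving the coefficient level: expanding each $(1-\alpha_j T)^z$ binomially and using $\bigl|\binom{z}{r}\bigr|\leq\binom{A+r-1}{r}$ for $|z|\leq A$, one sees that the coefficients of $L(T,\chi)^z$ are majorised by those of $(1-\sqrt{q}\,T)^{-A(m-1)}$, giving $|D_z(n,\chi)|\leq q^{n/2}\binom{n+Am}{n}$. Convolving with the $H_\chi$-coefficients (bounded exactly as in Proposition~\ref{B-Conv1} with $\sigma=1/2$, yielding only an $n^{c_A}$ loss) and then a single Cauchy integral on $|z|=1$ gives $|\Pi_k(n;\chi)|\leq q^{n/2}\binom{n+Am}{n}\,n^{c_A}$. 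The threshold on $m$ then drops out of Stirling's bound $\binom{n+Am}{n}\leq(1+\tfrac{A}{2})^n(1+\tfrac{2}{A})^{An/2}\leq(1+\tfrac{A}{2})^n e^n$ once $m\leq n/2$; your ``three-way optimisation'' is asserted to reproduce this constant but, because of the issue above, is not actually carried out.

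Two smaller remarks on the $\chi_0$ case. First, there is no leftover $(1-1/q^{\deg p})^{-z}$ to absorb: since you divide $\prod_{p\nmid d}(1+zT^{\deg p})$ by the \emph{full} $\zeta(T)^z$, the quotient is exactly $F(T,z)\prod_{p\mid d}(1+zT^{\deg p})^{-1}$, which at $T=1/q$ gives $G_d$ on the nose. Second, the paper does expend real effort (several lemmas) to show that the error in the principal-character analogue of Corollary~\ref{A_z} depends on $m$ only through a factor $(1+\log m)^{K_A}$, which is then absorbed under the hypothesis $m\ll n$; your ``near-verbatim rerun'' hides this.
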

This range on the degree of the difference is obtained by our use of Weil's `Riemann Hypothesis', which allows us to bound the contributions from the non-principal characters as roughly square-root of the contribution from the principal character. A better range would require additional cancellation amongst these characters. \\ This range corresponds to taking the difference up to roughly $\sqrt{x}$ in the number field setting, compared to any fixed power of $\log x$ as in Theorem 1 of \cite{Spiro}.
\gap

Finally, by using an `involution-trick', we apply Theorem \ref{SS-AP} to derive an asymptotic formula for the number of such polynomials in a given `short interval' of length no shorter than roughly $n/2$ (which again corresponds to roughly $\sqrt{x}$ in the number field setting).

\begin{theorem} \label{SS-SI}
Let $g \in \F_q[X]$. Let $A>1$, $n \geq 2$ and $ 1 \leq k \leq A\log n$. \\
Then for $h$ satisfying $n - 1 \geq h \geq \left(\frac{1}{2} + \frac{1+\log(1+\frac{A}{2})}{\log q}\right)(n+1)$, we have
\begin{align*}
\Pi_k(n; g; h) &:= \#\{f \in \M_n \ \ \deg (f-g) \leq h : f = p_1\ldots p_k \text{ for some }  p_1, \ldots, p_k \in \I \text{ distinct} \} \\
&= \frac{q^{h+1}}{n}\frac{(\log n)^{k-1}}{(k-1)!}\left(H\left(\frac{k-1}{\log n}\right) + \frac{k-1}{q\log n}H\left(\frac{k-2}{\log (n-1)}\right)  + O_A\left( \frac{k}{(\log n)^2} \right) \right)
\end{align*}
where $H(z) = \frac{q}{q+z} G(z)$ and $G(z)$ is defined as in Theorem \ref{SS}.
\end{theorem}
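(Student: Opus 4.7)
My plan is to reduce Theorem \ref{SS-SI} to Theorem \ref{SS-AP} via the reversal involution. For $f \in \M_n$ with $f(0) \neq 0$, put $\tilde f(X) := f(0)^{-1} X^n f(1/X)$, which is again monic of degree $n$, satisfies $\tilde{\tilde f} = f$, and permutes the irreducibles $\neq X$ degree-by-degree; in particular it preserves squarefreeness and the count $\omega(f)$ of distinct prime divisors. The crucial feature is that reversal swaps top-coefficient constraints on $f$ with low-coefficient constraints on $\tilde f$: assuming, as we may (else the count is zero), that $g$ is monic of degree $n$, and writing $\gamma(X) := \sum_{j=0}^{n-h-1} g_{n-j} X^j$ (so $\gamma(0)=1$), the short-interval condition $\deg(f-g)\leq h$ is equivalent to $\tilde f \equiv f(0)^{-1}\gamma \pmod{X^{n-h}}$.

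First I split $\Pi_k(n;g;h)$ according to whether $X \mid f$. For the $X \nmid f$ contribution, the map $f \mapsto \tilde f$ gives a bijection onto $\bigsqcup_{a \in \F_q^\times}\{F \in \M_n : F \equiv a\gamma \pmod{X^{n-h}}\}$, preserving the ``squarefree with $k$ distinct prime factors'' property. Applying Theorem \ref{SS-AP} with $d = X^{n-h}$ (using that $\gcd(a\gamma,X)=1$), together with $\Phi(X^{n-h}) = q^{n-h-1}(q-1)$ and $G_{X^{n-h}}(z) = (1+z/q)^{-1} G(z) = H(z)$, and then summing over the $q-1$ values of $a$, produces the first main term $\frac{q^{h+1}}{n}\frac{(\log n)^{k-1}}{(k-1)!}H(\tfrac{k-1}{\log n})$ with an error of the claimed form. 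This step needs $n-h \leq (1/2-c)n$ for $c := (1+\log(1+A/2))/\log q$, well within the stated range.

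For the $X \mid f$ contribution, I write $f = X f'$: squarefreeness of $f$ forces $X \nmid f'$ with $f'$ squarefree and $\omega(f') = k-1$, while the short-interval condition translates to $\deg(f'-g') \leq h-1$ for the $g' \in \M_{n-1}$ obtained by shifting indices of the fixed top coefficients of $g$. Applying the reversal trick to $f'$ and invoking Theorem \ref{SS-AP} at parameters $(n-1, k-1, X^{n-h})$ (the analogous $\gamma'$ works out to the same $\gamma$ as before) yields $\frac{q^h}{n-1}\frac{(\log(n-1))^{k-2}}{(k-2)!}H(\tfrac{k-2}{\log(n-1)})$ up to acceptable error. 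I then rewrite this as $\frac{q^{h+1}}{n}\frac{(\log n)^{k-1}}{(k-1)!}\cdot\frac{k-1}{q\log n}H(\tfrac{k-2}{\log(n-1)})$; the multiplicative discrepancy $\tfrac{n}{n-1}\bigl(\tfrac{\log(n-1)}{\log n}\bigr)^{k-2} = 1 + O\bigl(\tfrac{k}{n\log n}\bigr)$ is easily absorbed into $O_A(k/(\log n)^2)$ since $k \leq A\log n$. The strengthened hypothesis $h \geq (1/2+c)(n+1)$ is calibrated precisely so that $n-h \leq (1/2-c)(n-1)$ holds too, permitting this recursive use of Theorem \ref{SS-AP}.

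The main obstacle, beyond the bookkeeping, will be tracking the factor $f(0)^{-1}$ across the involution (which forces the sum over $a \in \F_q^\times$ in both cases) and verifying that the error from Theorem \ref{SS-AP}, after summation over $a$ and combined with the ``$n$ vs.\ $n-1$'' and ``$\log n$ vs.\ $\log(n-1)$'' discrepancies in the secondary term, fits within the single $O_A(k/(\log n)^2)$ term claimed.
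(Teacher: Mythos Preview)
Your proposal is correct and follows essentially the same route as the paper: split according to whether $X\mid f$, use the reversal involution to convert the short-interval condition into a congruence modulo $X^{n-h}$, sum over the $q-1$ possible leading/constant coefficients, and apply Theorem~\ref{SS-AP} (the paper via Remark~\ref{SS-AP-Rmk}) at parameters $(n,k)$ and $(n-1,k-1)$ respectively. The only differences are cosmetic---you normalise the involution to land in $\M_n$ and name the residue $\gamma$ explicitly, and you phrase the $X\mid f$ case as a short interval at level $n-1$ before involuting rather than involuting $Xf'$ directly---but the substance, including the range check $n-h\leq(1/2-c)(n-1)$ and the absorption of the $n\leftrightarrow n-1$ discrepancies, is identical.
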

The two main terms in Theorem \ref{SS-SI}	come from counting polynomials with non-zero constant term and polynomials with zero constant term separately. In the range where $k \asymp 
\log n$, the latter is roughly a factor of $q$ smaller than the former, and so of the same order of magnitude in the limit as $n$ tends to infinity. 
\gap

Note that, though Theorems \ref{SS}, \ref{SS-AP} and \ref{SS-SI} are relevant only in the regime in which $n$ tends to infinity, the error term in each does not depend on $q$. Moreover, we briefly discuss in Section \ref{qlimit} the regime in which $q$ tends to infinity.

\gap

\section{The function field Sath\'{e}-Selberg formula} \label{SS-Section}

\subsection{Outline}

Let $\omega(f) = \#\{p \in \I  :  p|f \}$  and define the M\"{o}bius function on $\M$ by 
$$\mu(f) = \begin{cases} (-1)^{\omega(f)}& \text{  if  } f \text{ is square-free}\\0& \text{   otherwise   }\end{cases} $$
so that $\mu^2$ is the indicator function for the square-free polynomials in $\M$ and
$$\Pi_k(n) = \sum_{\substack{f \in \M_n \\ \omega(f) = k }} \mu^2(f).$$ In order to study $\Pi_k(n)$, we will consider a two variable zeta function for $\M$ which will serve to count irreducible factors, namely,
$$ A(T, z) = \sum_{f \in \M} \mu^2(f) z^{\omega(f)}T^{\deg f} = \prod_{p \in \I} (1+zT^{\deg p}).$$ 
By taking $z \in \mathbb{C}$ and considering $A(T, z)$ as a power series in $T$ we will derive estimates for its coefficients, which we denote by $A_z(n)= \sum_{f \in \M_n} \mu^2(f) z^{\omega(f)}$. Then we can recover $\Pi_k(n)$ from the identity 
$$\sum_{k\geq 0}\Pi_k(n) z^k = A_z(n)$$ using Cauchy's formula $$\Pi_k(n) = \frac{1}{2\pi i}\oint \frac{A_z(n)}{z^{k+1}}dz.$$ 
This plan will be carried out by first deriving an estimate for the coefficients of the power series of $\zeta(T)^z$, where $\zeta(T) = \sum_{f \in \M} T^{\deg f}$ is the zeta function for $\M$, and then relating this to the estimate we want. Throughout, $A>1$ will be an arbitrary constant and $z$ a complex variable satisfying $|z|\leq A$.

\subsection{Proof of Theorem 1}

First note that there are $q^n$ polynomials in $\M_n$ and that therefore, for  $|T|<1/q$,
$$\zeta(T) = \sum_{f \in \M} T^{\deg f} = \sum_{n\geq 0}q^nT^n = \frac{1}{1-qT}.$$
For $T$ in this range, we define $\zeta(T)^z=\exp(z\log \zeta(T))$, where we choose the branch of the logarithm which is defined on the cut plane $\mathbb{C} \backslash [0,\infty)$ and is real for $T$ real. 

\begin{lem} \label{D_z}
If we define $D_z(n)$ for $n\geq 0$ via the identity $\zeta(T)^z = \sum_{n\geq 0}D_z(n)T^{n}$, then we have that $$D_z(n) = q^n \binom {n+z-1}{n} $$
where $ \binom{w}{n} = \frac{1}{n!}\prod_{j=0}^{n-1} (w-j) $.
\end{lem}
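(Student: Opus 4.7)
The plan is to recognise $\zeta(T)^z$ as the standard generalised binomial series in disguise. Since $\zeta(T)=1/(1-qT)$ is real and positive for real $T \in [0,1/q)$, the chosen branch of the logarithm gives $\log \zeta(T) = -\log(1-qT)$ (principal branch), and hence $\zeta(T)^z = (1-qT)^{-z}$ as analytic functions on $|T|<1/q$.

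Next I would invoke the generalised binomial theorem: for $|x|<1$ and any $w\in\C$,
\[ (1-x)^{w} = \sum_{n\geq 0} \binom{w}{n}(-x)^n. \]
Applying this with $w=-z$ and $x=qT$ gives
\[ \zeta(T)^z = \sum_{n\geq 0} \binom{-z}{n}(-1)^n q^n T^n. \]
Comparing coefficients with the definition $\zeta(T)^z = \sum_{n\geq 0} D_z(n) T^n$ yields $D_z(n) = (-1)^n q^n \binom{-z}{n}$.

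It then remains to verify the elementary identity $(-1)^n\binom{-z}{n} = \binom{n+z-1}{n}$. Using the definition of $\binom{w}{n}$ given in the statement, we compute
\[ (-1)^n \binom{-z}{n} = \frac{(-1)^n}{n!}\prod_{j=0}^{n-1}(-z-j) = \frac{1}{n!}\prod_{j=0}^{n-1}(z+j) = \frac{1}{n!}\prod_{j=0}^{n-1}\bigl((n+z-1)-j\bigr) = \binom{n+z-1}{n}, \]
which gives the claimed formula. There is no real obstacle here; the only point needing care is the consistency of the branch choice, which is confirmed by noting that both sides equal $1$ at $T=0$ (the constant term $D_z(0) = q^0\binom{z-1}{0} = 1$ matches $\exp(z\log\zeta(0)) = \exp(0)=1$) and both are analytic on $|T|<1/q$, so they must agree throughout the disc by the identity theorem.
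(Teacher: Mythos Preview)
Your proof is correct and follows essentially the same approach as the paper: both reduce to applying the generalised binomial theorem to $(1-qT)^{-z}$. The paper's proof is a single line that simply quotes the result, whereas you have carefully spelled out the branch consistency and the identity $(-1)^n\binom{-z}{n}=\binom{n+z-1}{n}$, but there is no substantive difference in method.
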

\begin{proof}
The binomial theorem gives us $$\zeta(T)^z = (1-qT)^{-z} = \sum_{n\geq 0}\binom {n+z-1} {n}q^nT^n. $$
\end{proof}
\begin{coro} \label{DzExp} For all $n\geq 1$ and $|z| \leq A$,
$$ D_z(n)= q^n\frac{n^{z-1}}{\Gamma(z)} + O_A\left(q^n n^{\Re{z}-2}\right). $$
\end{coro}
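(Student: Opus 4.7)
The plan is to convert the binomial coefficient supplied by Lemma \ref{D_z} into a ratio of $\Gamma$-values and then apply Stirling's formula. Using the Pochhammer identity,
$$\binom{n+z-1}{n}=\frac{z(z+1)(z+2)\cdots(z+n-1)}{n!}=\frac{\Gamma(n+z)}{\Gamma(z)\,\Gamma(n+1)},$$
so that $D_z(n)=q^n\,\Gamma(n+z)/\bigl(\Gamma(z)\,\Gamma(n+1)\bigr)$. It therefore suffices to establish the uniform asymptotic
$$\frac{\Gamma(n+z)}{\Gamma(n+1)} = n^{z-1} + O_A\bigl(n^{\Re z - 2}\bigr) \qquad (n\geq 1,\ |z|\leq A),$$
after which dividing by $\Gamma(z)$ and multiplying by $q^n$ yields the claim.

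To prove this, I would invoke Stirling in the form $\log \Gamma(w) = (w-\tfrac12)\log w - w + \tfrac12\log(2\pi) + O(1/|w|)$, valid in any sector away from the negative real axis, and apply it to both $\Gamma(n+z)$ and $\Gamma(n+1)$. Subtracting gives
$$\log\frac{\Gamma(n+z)}{\Gamma(n+1)} = (n+z-\tfrac12)\log(n+z) - (n+\tfrac12)\log(n+1) - (z-1) + O_A(1/n).$$
Expanding $\log(n+z) = \log n + z/n + O_A(1/n^2)$ and $\log(n+1)=\log n + 1/n + O(1/n^2)$ and collecting terms, everything outside the $(z-1)\log n$ contribution is $O_A(1/n)$, so $\Gamma(n+z)/\Gamma(n+1) = n^{z-1}\bigl(1 + O_A(1/n)\bigr)$. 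Since $|n^{z-1}| = n^{\Re z - 1}$, this gives exactly the desired error $O_A(n^{\Re z - 2})$, and the bound is uniform in $|z|\leq A$ because the implied constants in Stirling depend only on $A$ (the argument $n+z$ stays in a fixed sector about the positive real axis once $n$ is larger than $A$, and the finitely many small $n$ are handled trivially since the left-hand side is bounded by a constant depending on $A$).

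The main obstacle, such as it is, is keeping the error estimate \emph{uniform} in $z$ across the disk $|z|\leq A$; this is what forces one to track the $z$-dependence carefully in Stirling rather than simply quoting $\Gamma(n+z)/\Gamma(n+1)\sim n^{z-1}$. A secondary nuisance is that $\Gamma(z)$ has poles at $z=0,-1,-2,\ldots$; however, at these points the numerator $\Gamma(n+z)/\Gamma(n+1)$ is finite while $D_z(n) = q^n\binom{n+z-1}{n}$ already vanishes for $n$ large enough (since one factor in the Pochhammer product is zero), so the stated identity is to be read in the obvious limiting sense, or one simply restricts to $z$ in $\{|z|\leq A\}$ away from these isolated points, the statement extending by continuity in $z$.
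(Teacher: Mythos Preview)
Your argument is correct. The overall structure matches the paper's: reduce to estimating $\Gamma(n+z)/\Gamma(n+1)$, handle the non-positive integer values of $z$ separately (where both $D_z(n)$ and $1/\Gamma(z)$ vanish for $n\geq 2A$), and absorb finitely many small $n$ into the implied constant. The one genuine difference is the tool used for the ratio $\Gamma(n+z)/\Gamma(n+1)$: you invoke Stirling's formula with its uniform $O(1/|w|)$ remainder in a sector, whereas the paper instead inserts the Weierstrass product $\Gamma(z)^{-1}=z e^{\gamma z}\prod_{k\geq 1}(1+z/k)e^{-z/k}$ and telescopes the partial product against $\prod_{k\leq n}(1+z/k)$, using only the elementary estimates $\sum_{k\leq n}1/k=\log n+\gamma+O(1/n)$ and $\log(1+z/k)=z/k+O_A(1/k^2)$ for $k>n$. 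Your Stirling route is shorter and is the textbook way to obtain $\Gamma(n+z)/\Gamma(n+1)=n^{z-1}(1+O_A(1/n))$; the paper's Weierstrass computation is more self-contained (it does not rely on an external asymptotic expansion with remainder) but is a few lines longer. Either way one lands on $n^{z-1}(1+O_A(1/n))$, and your treatment of the poles of $\Gamma(z)$ by continuity of $1/\Gamma(z)$ is equivalent to the paper's explicit case split.
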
 
\begin{proof} By choosing the implied constant large enough, it is sufficient to prove this for $n\geq 2A$. In this range, we consider two cases. The first is when $z$ is a non-positive integer, in which case $ D_z(n) = 0 = \frac{q^n}{\Gamma(z)} n^{z-1} $ . Otherwise we can use the Weierstrass Product Formula for $\Gamma(z)$ in the second line below to get
\begin{align*}
\frac{\Gamma(n+z)}{\Gamma(n+1)} &= \frac{1}{n+z} \left(\prod_{k=1}^{n} \frac{k + z}{k}\right) z\Gamma(z) \\
& = \frac{1}{n+z} \left(\prod_{k=1}^{n} \frac{k + z}{k}\right) e^{-\gamma z} \left(\prod_{k=1}^{\infty} \frac{k}{k+z} e^{z/k}\right) \\
& = \frac{e^{-\gamma z}}{n+z} \left(\prod_{k=1}^{n} e^{z/k}\right) \left(\prod_{k=n+1}^{\infty} \frac{k}{k+z} e^{z/k}\right) \\
& = \frac{e^{-\gamma z}}{n+z} \exp\left(\sum_{k=1}^n \frac{z}{k}\right) \exp\left(\sum_{k=n+1}^{\infty} \left(\frac{z}{k} - \log\left(1 + \frac{z}{k}\right)\right)\right) \\
& = \frac{e^{-\gamma z}}{n+z} \exp\left(z\left(\log n + \gamma + O\left(\frac{1}{n}\right)\right)\right) \exp\left(\sum_{k=n+1}^{\infty} \sum_{m = 2}^\infty (-1)^m \frac{z^m}{mk^m}\right) \\
& = \frac{n^z}{n+z} \left(1 + O_A\left(\frac{1}{n}\right)\right) \exp\left(O_A\left(\frac{1}{n}\right)\right) = n^{z-1}\left(1+O_A\left(\frac{1}{n}\right)\right).
\end{align*}
From this and Lemma \ref{D_z} we can conclude that
\begin{align*}
D_z(n) = q^n \binom {n+z-1}{n} = q^n \frac{\Gamma(n+z)}{\Gamma(n+1)\Gamma(z)} = q^n\frac{n^{z-1}}{\Gamma(z)}\left(1+O_A\left(\frac{1}{n}\right)\right).
\end{align*}
\end{proof}

It was fairly straightforward to derive an asymptotic formula for $D_z(n)$. The following technical proposition will allow us to use this result to deduce asymptotic formulae for the coefficients of more general series provided their behaviour at $1/q$ is similar to the singularity of $\zeta(T)^z$ at $T=1/q$.

\begin{prop}  \label{ModD} Let $ C(T, z) = \sum_{n \geq 0} C_z(n) T^n $ and $ M(T, z) = \sum_{n \geq 0} M_z(n) T^n $ be power series with coefficients depending on $z$ satisfying $ C(T, z) = M(T,z)\zeta(T)^z $. Suppose also that, uniformly for $|z|\leq A$,
\[\sum_{a\geq 0}\frac{|M_z(a)|}{q^{a}}a^{2A+2} \ll_{A} 1. \label{ConvCond} \tag{$\star$} \] 
Then, uniformly for $|z|\leq A$ and $n\geq 1$, we have $$C_z(n) = q^n\frac{n^{z-1}}{\Gamma(z)}M(1/q,z) + O_A(q^n n^{\Re z-2}).$$
\end{prop}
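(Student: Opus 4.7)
The plan is to expand $C_z(n)$ as a Cauchy product via $C(T,z)=M(T,z)\zeta(T)^z$, namely
\[ C_z(n)=\sum_{a=0}^{n} M_z(a)D_z(n-a), \]
and rewrite the claimed main term as $q^{n}\frac{n^{z-1}}{\Gamma(z)}M(1/q,z)=q^{n}\frac{n^{z-1}}{\Gamma(z)}\sum_{a\geq 0}\frac{M_z(a)}{q^{a}}$, so that
\[ C_z(n)-q^{n}\frac{n^{z-1}}{\Gamma(z)}M(1/q,z)=\sum_{a=0}^{n}M_z(a)\!\left[D_z(n-a)-q^{n-a}\frac{n^{z-1}}{\Gamma(z)}\right]-q^{n}\frac{n^{z-1}}{\Gamma(z)}\sum_{a>n}\frac{M_z(a)}{q^{a}}. \]
Then I would estimate the two pieces separately. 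The tail piece is handled instantly by $(\star)$: since $\sum_{a>n}\frac{|M_z(a)|}{q^{a}}\leq n^{-2A-2}\sum_{a\geq 0}\frac{|M_z(a)|}{q^{a}}a^{2A+2}\ll_A n^{-2A-2}$, and $|n^{z-1}/\Gamma(z)|\ll_A n^{\Re z-1}$ on $|z|\leq A$, its contribution is $\ll_A q^{n}n^{\Re z-2A-3}\ll q^{n}n^{\Re z-2}$ because $A>1$.

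For the main piece I would split the sum at $a=\lfloor n/2\rfloor$. In the \emph{small-$a$} range $0\leq a\leq n/2$ (so $n-a\geq n/2$), Corollary~\ref{DzExp} applies to $D_z(n-a)$, and the difference becomes
\[ D_z(n-a)-q^{n-a}\frac{n^{z-1}}{\Gamma(z)}=\frac{q^{n-a}}{\Gamma(z)}\bigl[(n-a)^{z-1}-n^{z-1}\bigr]+O_A\!\left(q^{n-a}(n-a)^{\Re z-2}\right). \]
Writing $(n-a)^{z-1}-n^{z-1}=n^{z-1}\bigl[(1-a/n)^{z-1}-1\bigr]$ and using $|(1-x)^{z-1}-1|\ll_A x$ for $|x|\leq 1/2,\,|z|\leq A$, both error contributions are $O_A\!\left(q^{n-a}n^{\Re z-2}(a+1)\right)$. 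Summing against $|M_z(a)|$ and using $(\star)$,
\[ \sum_{0\leq a\leq n/2}|M_z(a)|\cdot O_A\!\left(q^{n-a}n^{\Re z-2}(a+1)\right)\ll_A q^{n}n^{\Re z-2}\sum_{a\geq 0}\frac{|M_z(a)|}{q^{a}}(a+1)\ll_A q^{n}n^{\Re z-2}, \]
as required.

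In the \emph{large-$a$} range $n/2<a\leq n$ I would bound things crudely: Corollary~\ref{DzExp} (or Lemma~\ref{D_z}) gives $|D_z(m)|\ll_A q^{m}m^{A-1}$ for $m\geq 1$ and $|D_z(0)|=1$, and likewise $q^{n-a}|n^{z-1}/\Gamma(z)|\ll_A q^{n-a}n^{A-1}$. Thus this range contributes at most
\[ q^{n}n^{A-1}\sum_{a>n/2}\frac{|M_z(a)|}{q^{a}}\ll_A q^{n}n^{A-1}\cdot n^{-2A-2}=q^{n}n^{-A-3}\ll q^{n}n^{\Re z-2}, \]
since $\Re z\geq -A$. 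Assembling the three bounds yields the proposition.

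The only real obstacle is choosing the split point and the two different strategies on either side: on the \emph{small-$a$} side one must extract the cancellation between $(n-a)^{z-1}$ and $n^{z-1}$ to save a factor of $1/n$, whereas on the \emph{large-$a$} side the hypothesis $(\star)$ with its generous polynomial weight $a^{2A+2}$ is what lets a trivial bound on $D_z$ suffice. Everything else is bookkeeping of error terms uniform in $|z|\leq A$.
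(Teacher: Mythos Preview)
Your proof is correct and follows essentially the same approach as the paper: both use the convolution $C_z(n)=\sum_{a}M_z(a)D_z(n-a)$, split at $a\approx n/2$, extract the cancellation $(n-a)^{z-1}=n^{z-1}(1+O_A(a/n))$ on the small-$a$ side, and invoke $(\star)$ with its polynomial weight to kill the large-$a$ side and the tail. The only difference is organizational---you subtract the full main term $q^n\frac{n^{z-1}}{\Gamma(z)}M(1/q,z)$ at the outset and bound the resulting difference, whereas the paper inserts the asymptotic of Corollary~\ref{DzExp} first and then extends the truncated sum to $M(1/q,z)$; the estimates performed are the same.
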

\begin{proof}
Using our expression for $D_z(n)$ from Corollary \ref{DzExp} and that $D_z(0) = 1$, we get
\begin{align*}
C_z(n) &= \sum_{0\leq a \leq n}M_z(a)D_z(n-a) \\
&= q^n \left[ \sum_{0\leq a < n}\frac{M_z(a)}{q^a}\frac{(n-a)^{z-1}}{\Gamma(z)} + O_A\left(\sum_{0\leq a < n} \frac{|M_z(a)|}{q^a}(n-a)^{\Re{z}-2}\right) + \frac{M_z(n)}{q^n} \right].
\end{align*}
Here we split the first sum at $n/2$ and use the fact that
\begin{equation*}
(n-a)^{z-1} = \begin{cases}
n^{z-1}\left(1+O_A(a/n)\right), &\text{ if } 0\leq a \leq n/2 \\
O_A(n^{A-1}), &\text{ if } n/2 < a < n.
\end{cases}
\end{equation*}
Combining this with (\ref{ConvCond}) we get 
\begin{align*}
\sum_{0\leq a < n}\frac{M_z(a)}{q^a}\frac{(n-a)^{z-1}}{\Gamma(z)} &= \sum_{0\leq a \leq n/2}\frac{M_z(a)}{q^a}\frac{n^{z-1}}{\Gamma(z)}\left(1+O_A(a/n)\right) + O_A\left(\sum_{n/2 <a < n} \frac{|M_z(a)|}{q^a}n^{A-1}\right) \\
&= \sum_{0\leq a \leq n/2}\frac{M_z(a)}{q^a}\frac{n^{z-1}}{\Gamma(z)} + O_A\left(n^{\Re z-2}\sum_{0\leq a \leq n/2}\frac{|M_z(a)|a}{q^{a}} + n^{\Re z-2} \sum_{n/2 <a < n} \frac{|M_z(a)|a^{2A+1}}{q^a}\right) \\
&= \frac{n^{z-1}}{\Gamma(z)}M(1/q,z) + O_A\left( n^{\Re z- 1}\sum_{a > n/2}\frac{|M_z(a)|}{q^a} + n^{\Re z-2} \right) \\
&= \frac{n^{z-1}}{\Gamma(z)}M(1/q,z) + O_A\left( n^{\Re z- 2}\sum_{a > n/2}\frac{|M_z(a)|a}{q^a} + n^{\Re z-2} \right) \\
&= \frac{n^{z-1}}{\Gamma(z)}M(1/q,z) + O_A\left(n^{\Re z-2} \right).
\end{align*}
Where, in the final term of the second line, we use that $ n^{\Re z-2} a^{2A+1} \gg n^{-A-2} n^{2A+1} = n^{A-1} $ for $ n/2 < a < n $. \\
Similarly, for the second sum we get
\begin{align*}
\sum_{0\leq a < n}\frac{|M_z(a)|}{q^a} (n-a)^{\Re{z}-2} &= \sum_{0\leq a \leq n/2}\frac{|M_z(a)|}{q^a} n^{\Re{z}-2} \left(1+O_A(a/n)\right) + O_A\left(\sum_{n/2 <a < n} \frac{|M_z(a)|}{q^a}n^{A-2}\right) \\
&\ll_A n^{\Re{z}-2} \sum_{0\leq a \leq n/2}\frac{|M_z(a)|}{q^a} + n^{\Re z-3} \sum_{0\leq a \leq n/2}\frac{|M_z(a)|a}{q^{a}} + n^{\Re z- 3} \sum_{n/2 <a < n} \frac{|M_z(a)|a^{2A+1}}{q^a} \\
&\ll_A n^{\Re{z}-2}.
\end{align*}
Finally,  by (\ref{ConvCond}) we have that the last term is
$$ \frac{M_z(n)}{q^n} \ll n^{\Re{z-2}}\frac{|M_z(n)|n^{A+2}}{q^n} \ll n^{\Re{z-2}} \sum_{a\geq 0}\frac{|M_z(a)|}{q^{a}}a^{A+2} \ll_A n^{\Re{z-2}}.  $$
Putting everything together proves the proposition.
\end{proof}
\begin{rmk}
This follows the same ideas as Theorem 7.18 of \cite{MontVaughan}.
\end{rmk}

We will apply the previous proposition with the series $F(T,z) = \sum _{n \geq 0} B_z(n) T^n$ defined by
$$F(T,z) := A(T,z)\zeta(T)^{-z} = \prod_{p \in \I}  (1+zT^{\deg p})(1-T^{\deg p})^z.$$ 
First we check that the conditions of Proposition \ref{ModD} are satisfied.

\begin{prop} \label{B-Conv1} For $|z|\leq A$, $n \geq 2$ and $\sigma \geq \frac{1}{2}$ 
$$\sum_{0 \leq a \leq n}\frac{|B_z(a)|}{q^{\sigma a}} \leq \begin{cases} c_{A, \sigma} &\text{ if } \sigma > \frac{1}{2} \\   n^{c_A} &\text{ if } \sigma = \frac{1}{2}, \end{cases} $$
where $c_{A,\sigma}$ is a constant depending on $A$ and $\sigma$, and $c_A$ is a constant depending on $A$. \\ \\ 
Consequently, since $a^{2A+2} \leq q^{a/3}$ for $a$ sufficiently large, we have for $|z|\leq A$ that
$$\sum_{a\geq 0}\frac{|B_z(a)|}{q^{a}}a^{2A+2} \ll_{A} \sum_{a\geq 0}\frac{|B_z(a)|}{q^{2a/3}}\ll_{A} 1. $$
\end{prop}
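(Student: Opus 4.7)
The plan is to exploit the Euler product structure of $F(T,z) = \prod_{p \in \I}(1+zT^{\deg p})(1-T^{\deg p})^z$ directly, with the central observation that in each local factor $f_p(T,z) := (1+zT^{\deg p})(1-T^{\deg p})^z$ the linear term in $T^{\deg p}$ vanishes: the contribution $+z$ from the first bracket cancels against the $-z$ coming from the expansion $(1-u)^z = 1 - zu + \cdots$. This cancellation is precisely what pushes the natural abscissa of convergence of $F$ from $q^{-1}$ out to the square-root threshold $q^{-1/2}$, and it is what forces $\sigma = \tfrac{1}{2}$ to be the critical value in the statement.

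Concretely, I would expand $f_p(T,z) = \sum_{m \geq 0} c_m(z)\, T^{m \deg p}$ and, using a short identity such as $c_m(z) = (-1)^{m+1}\frac{(m-1)(z+1)}{m}\binom{z}{m-1}$ for $m \geq 2$ together with the elementary bound $|\binom{z}{m-1}| \leq \binom{A+m-2}{m-1}$ valid for $|z| \leq A$, deduce the non-negative majorant
\[
\sum_{m \geq 0}|c_m(z)|\, u^m \;\leq\; 1 + C_A u^2, \qquad 0 \leq u \leq q^{-1/2},
\]
with $C_A$ depending only on $A$. The vanishing linear term is what produces $u^2$ rather than $u$ on the right-hand side.

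The main step is to assemble these local majorants multiplicatively. Since $B_z(a)$ only involves primes of degree at most $a \leq n$, one has
\[
\sum_{0 \leq a \leq n}\frac{|B_z(a)|}{q^{\sigma a}} \;\leq\; \prod_{\substack{p \in \I \\ \deg p \leq n}}\Bigl(1 + C_A\, q^{-2\sigma \deg p}\Bigr) \;\leq\; \exp\Bigl(C_A \sum_{d \leq n} I_d\, q^{-2\sigma d}\Bigr),
\]
where $I_d := \#\{p \in \I : \deg p = d\}$. The standard estimate $I_d \leq q^d/d$ bounds the exponent by $C_A \sum_{d \leq n} d^{-1} q^{d(1-2\sigma)}$. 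For $\sigma > \tfrac{1}{2}$ this is a convergent tail giving a constant $c_{A,\sigma}$; for $\sigma = \tfrac{1}{2}$ it is $O_A(\log n)$, so the product is $\leq n^{c_A}$. The stated consequence then follows from $a^{2A+2} \leq q^{a/3}$ for all sufficiently large $a$, which reduces $\sum_a |B_z(a)| a^{2A+2}/q^a$ to the previous display with $\sigma = 2/3 > \tfrac{1}{2}$, up to a trivial $O_A(1)$ contribution from finitely many small $a$.

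The only delicate point is the opening coefficient analysis: verifying the cancellation giving $c_1(z) = 0$ and extracting the clean bound $|c_m(z)| \ll_A \binom{A+m-2}{m-1}$ for $m \geq 2$. Everything downstream is then bookkeeping with the prime polynomial theorem $I_d \leq q^d/d$.
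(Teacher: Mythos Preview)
Your proposal is correct and follows essentially the same route as the paper: both expand the local factor $(1+zS)(1-S)^z$, observe that the linear coefficient in $S$ vanishes, bound the remaining coefficients uniformly for $|z|\le A$, pass to the Euler product majorant over primes of degree $\le n$, take logarithms, and invoke the prime polynomial theorem to control $\sum_{d\le n} q^{d(1-2\sigma)}/d$. The only difference is how the coefficients $c_m(z)$ are bounded for $m\ge 2$: the paper uses Cauchy's inequality on the circle $|S|=1/\sqrt{3/2}$ to get $|c_m(z)|\le M_A(3/2)^{m/2}$, whereas you derive the explicit identity $c_m(z)=(-1)^{m+1}\tfrac{(m-1)(z+1)}{m}\binom{z}{m-1}$ and bound via $|\binom{z}{m-1}|\le\binom{A+m-2}{m-1}$; both yield a majorant of the shape $1+C_A u^2$ on the relevant range and the rest of the argument is identical.
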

\begin{proof}
If we let $b_z(f)$ be the multiplicative function defined on powers of monic irreducible polynomials $p$ by the power series identity
\begin{equation*}
1+ \sum_{k\geq 1}b_z(p^k)S^k =(1+zS)(1-S)^z
\end{equation*}
then $F(T,z) = \sum_{f \in \M}b_z(f)T^{\deg f}$ and so $ B_z(n) = \sum_{f \in M_n} b_z(f) $. From this definition, we see that $b_z(p)=0$ on irreducible $p$ and, by Cauchy's inequality after integrating over the complex circle $|S|=\frac{1}{\sqrt{3/2}}$, that $$|b_z(p^k)|\leq (3/2)^{k/2}M_A, \text{   for   } k\geq 2$$ where $M_A=\sup_{|z|\leq A, |S|\leq \frac{1}{\sqrt{3/2}}}|(1+zS)(1-S)^z|$ is some constant depending on A. \\Therefore, letting $\M_{\leq n} = \{f \in \M  :  \deg f \leq n\}$ and
$\I_{\leq n} = \{p \in \I  :  \deg p \leq n\}$, we have
\begin{align*}
\sum_{0\leq a \leq n}\frac{|B_z(a)|}{q^{\sigma a}} &\leq \sum_{f \in \M_{\leq n}}\frac{|b_z(f)|}{q^{\sigma \deg f}} \\
&\leq \prod_{p \in \I_{\leq n}}\left(1+\sum_{k \geq 1}\frac{|b_z(p^k)|}{q^{k \sigma \deg p}}\right) \\
&\leq \prod_{p \in \I_{\leq n}}\left(1+M_A\sum_{k\geq 2} \left(\frac{\sqrt{3/2}}{q^{\sigma \deg p}} \right)^k \right) \\
&=\prod_{p \in \I_{\leq n}}\left(1+\frac{3M_A/2}{q^{\sigma\deg p}(q^{\sigma \deg p}-\sqrt{3/2})}\right).
\end{align*}
Taking the logarithm and using the prime polynomial theorem we get
\begin{align*}
\sum_{p \in \I_{\leq n}}\log\left(1+ \frac{3M_A/2}{q^{\sigma\deg p}(q^{\sigma \deg p}-\sqrt{3/2})}\right) & \leq 6M_A \sum_{1 \leq d \leq n}\frac{q^{d(1-2 \sigma)}}{d} \leq \begin{cases} \frac{6M_A}{q^{2\sigma-1}-1} &\text{ if } \sigma > \frac{1}{2} \\  12 M_A \log n &\text{ if } \sigma = \frac{1}{2}. \end{cases}
\end{align*}
Exponentiating then gives the stated result.
\end{proof}
\begin{rmk} \label{holo}
Proposition \ref{B-Conv1} also proves that $F(1/q, z)$ is absolutely uniformly convergent for $|z|\leq A$ and so holomorphic in $z$ for $|z|\leq A$.
\end{rmk}
\begin{rmk}
This follows the same ideas as the beginning of Chapter II.6 of \cite{Tenenbaum}.
\end{rmk}

\begin{coro} \label{A_z}
Uniformly for $|z|\leq A$ and $n\geq 1$, we have $$A_z(n) = q^n\frac{n^{z-1}}{\Gamma(z)}F(1/q,z) + O_A(q^n n^{\Re z-2}).$$
\end{coro}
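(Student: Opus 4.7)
The plan is to obtain this as a direct application of Proposition \ref{ModD}, combined with the convergence estimate of Proposition \ref{B-Conv1}. First I would unpack the definition of $F$: since $F(T,z) := A(T,z)\zeta(T)^{-z}$, rearranging gives the factorization $A(T,z) = F(T,z)\zeta(T)^z$. So I set $C(T,z) = A(T,z)$ with coefficients $C_z(n) = A_z(n)$, and $M(T,z) = F(T,z) = \sum_{a \geq 0} B_z(a) T^a$ with coefficients $M_z(a) = B_z(a)$. This puts us in exactly the setup required by Proposition \ref{ModD}.

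Next I would verify the hypothesis $(\star)$ of Proposition \ref{ModD}, namely
\[\sum_{a\geq 0}\frac{|B_z(a)|}{q^{a}}a^{2A+2} \ll_{A} 1\]
uniformly for $|z|\leq A$. But this is precisely the second displayed inequality appearing in the statement of Proposition \ref{B-Conv1} (obtained there by combining the bound for $\sigma = 2/3$ with the observation that $a^{2A+2} \leq q^{a/3}$ for all sufficiently large $a$). So no new work is needed.

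With both hypotheses in place, Proposition \ref{ModD} yields
\[A_z(n) = q^n \frac{n^{z-1}}{\Gamma(z)} F(1/q,z) + O_A\bigl(q^n n^{\Re z - 2}\bigr)\]
uniformly for $|z| \leq A$ and $n \geq 1$, which is exactly the statement of the corollary; the value $F(1/q,z)$ is well-defined and holomorphic in $z$ on the disk $|z| \leq A$ by Remark \ref{holo}. There is no real obstacle here: the corollary is a pure assembly step, with all of the analytic content already absorbed into Propositions \ref{ModD} and \ref{B-Conv1}.
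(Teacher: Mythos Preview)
Your proposal is correct and matches the paper's own proof essentially verbatim: the paper simply says that by Proposition \ref{B-Conv1} the result follows from Proposition \ref{ModD} with $C(T,z) = A(T,z)$ and $M(T,z) = F(T,z)$. Your additional remark about $F(1/q,z)$ being well-defined via Remark \ref{holo} is a nice touch but not strictly required for the argument.
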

\begin{proof}
By Proposition \ref{B-Conv1}, this follows from Proposition \ref{ModD} with $C(T,z) = A(T,z)$ and $M(T,z) = F(T,z)$.
\end{proof}

We now turn to the proof of a generalisation of the main result in this section.

\begin{prop} \label{General-SS}
Let $A>1$, $M(z)$ be a holomorphic function for $|z|\leq A$, and $C_z(n)$ be an arithmetic function such that uniformly for $|z|\leq A$ and $n\geq 1$
$$C_z(n) = q^n\frac{n^{z-1}}{\Gamma(z)}M(z) + O_A(q^n n^{\Re z-3/2}).$$
Moreover, for $k \geq 1$ an integer, let $\alpha_k(n)$ be the arithmetic function defined by
$$ \alpha_k(n) = \frac{1}{2\pi i}\oint \frac{C_z(n)}{z^{k+1}}dz. $$
Then for $N(z)= \frac{M(z)}{\Gamma(1+z)}$, we have that uniformly for all $n\geq 2$ and $1\leq k \leq A \log n$
$$ \alpha_k(n) =\frac{q^n}{n}\frac{(\log n)^{k-1}}{(k-1)!}\left(N\left( \frac{k-1}{\log n} \right)+O_A\left( \frac{k}{(\log n)^2} \right)\right).$$
\end{prop}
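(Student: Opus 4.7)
The plan is a Selberg--Delange style saddle-point argument. Using $1/\Gamma(z)=z/\Gamma(1+z)$, rewrite the hypothesis as $C_z(n) = q^n n^{z-1} z N(z) + O_A(q^n n^{\Re z - 3/2})$ and substitute into $\alpha_k(n)=\frac{1}{2\pi i}\oint C_z(n)z^{-k-1}\,dz$, taking the contour to be the circle $|z|=\rho$ with $\rho:=(k-1)/\log n$; this lies in $\{|z|\le A\}$ since $k\le A\log n$, and it is the stationary point of $n^z/z^{k-1}$ on the positive real axis. The substitution produces
$$\alpha_k(n) = \frac{q^n}{n}\,\frac{1}{2\pi i}\oint_{|z|=\rho}\frac{n^z N(z)}{z^k}\,dz + E.$$
For $E$: on the contour $n^{\Re z}\le e^{k-1}$, so $|E|\ll_A q^n n^{-3/2}e^{k-1}/\rho^k$, and Stirling's formula shows this is smaller than the target error $q^n k(\log n)^{k-3}/(n(k-1)!)$ by a factor $\ll n^{-1/2}(\log n)^3 k^{-3/2}$, which tends to $0$ as $n\to\infty$.

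For the main integral $I:=\frac{1}{2\pi i}\oint n^z N(z) z^{-k}\,dz$, Taylor-expand $N$ to second order about $\rho$:
$$N(z) = N(\rho) + N'(\rho)(z-\rho) + (z-\rho)^2 R(z),$$
where $R$ is holomorphic on the closed disk with $\|R\|_\infty\ll_A 1$ via Cauchy's bounds applied to the integral remainder (using that $N$ is holomorphic on a neighbourhood of $|z|\le A$). Split $I = N(\rho)I_0 + N'(\rho)I_1 + I_R$ where $I_j := \frac{1}{2\pi i}\oint n^z(z-\rho)^j z^{-k}\,dz$. Cauchy's formula gives $I_0 = (\log n)^{k-1}/(k-1)!$ directly; expanding $(z-\rho)z^{-k} = z^{-(k-1)}-\rho z^{-k}$ and applying Cauchy term by term yields
$$I_1 = \frac{(\log n)^{k-2}}{(k-2)!} - \rho\,\frac{(\log n)^{k-1}}{(k-1)!} = 0,$$
using $\rho\log n = k-1$. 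This cancellation is precisely what motivates the choice of $\rho$.

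Finally, parametrising $z=\rho e^{i\theta}$ gives $|z-\rho|^2 = 4\rho^2\sin^2(\theta/2)$ and $|n^z| = e^{k-1}\exp(-2(k-1)\sin^2(\theta/2))$, leading to
$$|I_R|\ll_A \frac{e^{k-1}}{\rho^{k-3}}\int_0^\pi e^{-2(k-1)\sin^2(\theta/2)}\sin^2(\theta/2)\,d\theta.$$
Setting $u=\sin(\theta/2)$ and extending to $(0,\infty)$ reduces the $\theta$-integral to a Gaussian second-moment of order $(k-1)^{-3/2}$, and Stirling then converts $e^{k-1}/(\rho^{k-3}(k-1)^{3/2})$ into exactly $O_A(k(\log n)^{k-3}/(k-1)!)$, matching the target. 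Combining this with $N(\rho)I_0$ and $I_1=0$ completes the proof; the degenerate case $k=1$ is handled separately (any fixed-radius contour gives $\alpha_1(n)=q^n N(0)/n$ up to negligible error). The main obstacle is the $I_R$ estimate: a naive maximum-on-contour bound would lose a factor of order $\sqrt{k-1}$, so exploiting the Gaussian concentration of $|n^z|$ near $\theta=0$ (equivalently, the vanishing of the linear Taylor correction $I_1$) is essential.
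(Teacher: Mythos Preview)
Your proposal is correct and follows essentially the same Selberg--Delange saddle-point argument as the paper: both choose the contour $|z|=\rho=(k-1)/\log n$, bound the contribution of the $O_A(q^n n^{\Re z-3/2})$ term via Stirling, expand $N$ to second order at $\rho$, observe that the linear term vanishes, and control the quadratic remainder by the Gaussian-type integral $\int |z-\rho|^2|n^z/z^k|\,|dz|\ll \rho^{3-k}e^{k-1}(k-1)^{-3/2}$. The only cosmetic difference is that you show $I_1=0$ by expanding $(z-\rho)z^{-k}=z^{-(k-1)}-\rho z^{-k}$ and applying Cauchy's formula directly, whereas the paper obtains the same identity via integration by parts on $\int n^z z^{-(k-1)}\,dz$.
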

\begin{proof}
When $k=1$ we integrate around the circle $ |z| = 1/4 $ to get
$$ \alpha_1(n)  = \frac{1}{2\pi i}\oint \frac{C_z(n)}{z^2}dz = \frac{q^n}{n} \left(\frac{1}{2\pi i}\oint \frac{N(z)n^z}{z} dz +O_{A}(n^{-1/4})\right) = \frac{q^n}{n} \left(N(0)+ O_{A}(n^{-1/4})\right).$$
Now assume $k>1$. We integrate the around the circle $|z| = r = \frac{k-1}{\log n} < A$ so that the contribution from the error term in $C_z(n)$ is
$$O_{A} \left(q^n \int_{|z| = r} \left|\frac{n^{\Re z - 3/2} dz}{z^{k+1}}\right|\right) \ll_{A} q^n n^{r-3/2}r^{-k} = \frac{q^n}{n^{3/2}}e^{k-1}\frac{(\log n)^k}{(k-1)^k} \ll_{A}  \frac{q^n}{n^{3/2}}\frac{(\log n)^k}{(k-1)!}$$
which is smaller than the error which we are aiming for in the theorem. \\
The contribution from the main term in $C_z(n)$ is
$$\frac{q^n}{n} \int_{|z|=r}\frac {N(z)n^z}{z^{k}}dz.$$
Integration by parts gives
$$\int_{|z|=r}\frac {n^z}{z^{k-1}}dz = \frac{k-1}{\log n}\int_{|z|=r}\frac{n^z}{z^k}dz = r\int_{|z|=r}\frac{n^z}{z^k}dz \implies \frac{1}{2\pi i}\int_{|z|=r}(z-r)\frac{n^z}{z^{k}}dz = 0 .$$ 
Using this fact to determine that the last term in the following line vanishes, we have
\begin{align*}
\frac{1}{2\pi i}\int_{|z|=r}N(z)\frac{n^z}{z^{k}}dz &= \frac{N(r)}{2\pi i}\int_{|z|=r}\frac{n^z}{z^{k}}dz  +  \frac{1}{2\pi i}\int_{|z|=r}\left(N(z)-N(r)-N'(r)(z-r)\right) \frac{n^z}{z^{k}}dz \\
&= \frac{N(r)}{2\pi i}\int_{|z|=r}\frac{n^z}{z^{k}}dz  +   O \left(\left|\int_{|z|=r}N''(r)(z-r)^2\ \frac{n^z}{z^{k}}dz \right|\right) \\ 
&=  N(r)\frac{(\log n)^{k-1}}{(k-1)!} + O_A\left( \int_{|z|=r} |z-r|^2 \left|\frac{n^z}{z^k}\right|  |dz|\right)
\end{align*}
where $N(z)$ is a composition of holomorphic functions and so holomorphic for $|z| \leq A$, so in the final line we can use that $N''(z)$ is uniformly bounded for $|z| \leq A$  by a constant depending on $A$.\\
We can estimate this last integral as follows 
\begin{align*}
\int_{|z|=r} |z-r|^2 \left|\frac{n^z}{z^k}\right|  |dz| & = \int_{0}^{2\pi}r^{3-k}|e^{i\theta}-1|^2e^{r\cos \theta \log n} d\theta \\
& = r^{3-k}\int_{-\pi}^{\pi}4\sin^2(\theta/2) e^{(k-1)\cos \theta}d\theta\\
&\leq r^{3-k}\int_{-\pi}^{\pi}\theta^2 e^{(k-1)(1-\theta^2/5)}d\theta \\
& \leq r^{3-k}e^{k-1}\int_{-\infty}^{\infty}\theta^2 e^{-(k-1)(\theta^2/5)}d\theta \\
& \ll r^{3-k}e^{k-1}(k-1)^{-3/2}.
\end{align*}
The error is therefore
$$ \ll_A \frac{q^n}{n}\frac{e^{k-1}}{(k-1)^{(k-3/2)}} (\log n)^{k-3} \ll_A \frac{q^n}{n}\frac{k(\log n)^{k-3}}{(k-1)!}$$ by Stirling's approximation again and the result follows.
\end{proof}
\begin{rmk}
This follows the same ideas as Theorem 7.19 of \cite{MontVaughan}.
\end{rmk}

Now, taking $M(z) = F(1/q, z)$, $N(z) = G(z)$, $C_z(n) = A_z(n)$ and $\alpha_k(n) = \Pi_k(n)$ in Proposition \ref{General-SS}, and using Remark \ref{holo} and Corollary \ref{A_z} to verify its hypotheses, we prove Theorem \ref{SS}.

\begin{rmk} \label{EK}
We can also estimate $\rho_k(n) :=\{f \in \M_n  :  \omega(f) = k \} $ by first proving an analogue of Proposition \ref{B-Conv1} for the power series
$$ \tilde{F}(T,z):= \zeta(T)^{-z} \sum_{f \in \M} z^{\omega(f)}T^{\deg f}=\prod_{p \in \I}\left(1+ \frac{zT^{\deg p}}{1-T^{\deg p}}\right)(1-T^{\deg p})^z$$
then applying Proposition \ref{ModD} with $M(T, z) = \tilde{F}(T, z)$ and $C(T, z) = \tilde{A}(T, z)$ where
$$ \tilde{A}(T,z) = \sum_{n \geq 0} \tilde{A}_z(n) T^n := \sum_{f \in \M} z^{\omega(f)}T^{\deg f} = \tilde{F}(T,z) \zeta(T)^{z} $$
and finally applying Proposition \ref{General-SS} with $M(z) = \tilde{F}(1/q, z)$, $N(z) = \tilde{G}(z) = \frac{\tilde{F}(1/q, z)}{\Gamma(1+z)}$, $C_z(n) = \tilde{A}_z(n)$ and $\alpha_k(n) = \rho_k(n)$, in order to obtain an analogue of Theorem \ref{SS}, namely
$$\rho_k(n) = \frac{q^n}{n}\frac{(\log n)^{k-1}}{(k-1)!}\left(\tilde{G}\left( \frac{k-1}{\log n} \right)+O_A\left( \frac{k}{(\log n)^2} \right)\right)$$
uniformly for all $n\geq 2$ and $1\leq k \leq A \log n$. \\ \\
Using this, and following Theorem 7.20 and Theorem 7.21 of \cite{MontVaughan}, we can prove the analogue of the Erd\H{o}s-Kac theorem for $\F_q[T]$, which tells us the mean, variance and limiting distribution of the function $\omega$.
\end{rmk}

\gap

\section{The Sath\'{e}-Selberg formula in arithmetic progressions}

We now follow the same strategy, but with Dirichlet $L$-functions, in order to count polynomials, with a prescribed number of irreducible
factors, in arithmetic progressions. In the next section, we will see how this can then be used to count such polynomials from a ``short interval". 

\gap

Let $d\in \M$ be some polynomial of degree $m\geq 1$. Consider the characters $\chi: \left(\mathbb{F}_q[X]/(d(X))\right)^\times \longrightarrow \mathbb{C}^\times$, with $\chi_0$ being the principal character, and let
$$L(T, \chi) = \sum_{f \in \M} \chi(f) T^{\deg f} =\prod_{p \in \I}(1-\chi(p)T^{\deg p})^{-1} $$ be the associated $L$-function. 
As for $\zeta(T)^z$, we define $L(T,\chi)^z = \exp(z\log L(T,\chi))$ for $|T|<1/q$ where we choose the branch of the logarithm which is real for $T$ real.
Our first task is to relate the coefficients of $\zeta(T)^z$ and $L(T,\chi)^z$. Consider the following identities which follow from the binomial theorem,
$$\zeta(T)^z=\prod_{p \in \I}(1-T^{\deg p})^{-z}=\prod_{p \in \I}\left(1+\sum_{k \geq 1}\binom{z+k-1}{k}T^{k\deg p}\right)$$ 
$$L(T,\chi)^z = \prod_{p \in \I}(1-\chi(p)T^{\deg p})^{-z}=\prod_{p \in \I}\left(1+\sum_{k\geq 1}\binom{z+k-1}{k}\chi(p^k)T^{k\deg p}\right).$$ We see that if $d_z(f)$ is the multiplicative function defined on irreducible powers $p^k$ as $d_z(p^k) = \binom{z+k-1}{k}$ then $\zeta(T)^z = \sum_{f \in \M}d_z(f)T^{\deg f}$ and $L(T,\chi)^z = \sum_{f \in \M}d_z(f)\chi(f)T^{\deg f}$. Hence, $ D_z(n, \chi) := \sum_{f \in \M_n} d_z(f) \chi(f) $ is the coefficient of $ T^n$ in $L(T, \chi)^z$.

\subsection{Generalised divisor sums twisted by non-principal characters}

\begin{prop} \label{D_z-NP}
For $\chi \neq \chi_0$, $ |z| \leq A $ and $ n \geq 1 $
$$|D_z(n,\chi)| \leq  q^{n/2} \binom{n+Am-(A+1)}{n} \leq q^{n/2} \binom{n+Am}{n}. $$
\end{prop}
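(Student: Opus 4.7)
The plan is to invoke Weil's Riemann Hypothesis for the Dirichlet $L$-functions over $\F_q[X]$. Since $\chi$ is non-principal modulo $d$ with $m = \deg d \geq 1$, a standard calculation (for $n \geq m$, the value $\sum_{f \in \M_n}\chi(f)$ factors through $\sum_{g \in (\F_q[X]/d)^\times}\chi(g) = 0$) shows that $L(T, \chi)$ is a polynomial in $T$ of degree at most $m - 1$. Weil's theorem then permits the factorisation
$$L(T, \chi) = \prod_{j=1}^N (1 - \alpha_j T), \qquad N \leq m - 1, \qquad |\alpha_j| \leq q^{1/2}.$$

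The next step is to raise to the $z$-th power factor by factor and expand each resulting binomial series, which converges for $|T|$ small and agrees there with the branch used to define $L(T,\chi)^z$:
$$L(T, \chi)^z = \prod_{j=1}^N (1 - \alpha_j T)^z = \prod_{j=1}^N \sum_{k \geq 0}\binom{z}{k}(-\alpha_j)^k T^k.$$
Reading off the coefficient of $T^n$, which by definition is $D_z(n, \chi)$, gives
$$D_z(n, \chi) = \sum_{\substack{k_1 + \cdots + k_N = n \\ k_j \geq 0}} \prod_{j=1}^N \binom{z}{k_j}(-\alpha_j)^{k_j}.$$
Applying the triangle inequality together with $|\alpha_j| \leq q^{1/2}$ immediately peels off the overall factor $q^{n/2}$. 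For $|z| \leq A$ I would then use the elementary bound $\left|\binom{z}{k}\right| \leq \binom{A+k-1}{k}$, which follows from $|z - i| \leq |z| + i \leq A + i$.

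What remains is to estimate
$$\sum_{\substack{k_1+\cdots+k_N=n \\ k_j \geq 0}}\prod_{j=1}^N \binom{A+k_j-1}{k_j},$$
and the key observation is that this is precisely the coefficient of $T^n$ in $\prod_{j=1}^N (1-T)^{-A} = (1-T)^{-AN}$, hence equals $\binom{AN+n-1}{n}$ by the binomial theorem. Since $N \leq m - 1$ and $\binom{w}{n}$ is increasing in $w$ for $w \geq n-1$, the bound $\binom{AN + n - 1}{n} \leq \binom{n + Am - (A+1)}{n} \leq \binom{n+Am}{n}$ follows (the tail inequality using $Am \geq A > 1$). The only substantive input is Weil's Riemann Hypothesis; the rest is generating-function bookkeeping, so no serious obstacle is anticipated beyond correctly invoking the factorisation and the binomial bound.
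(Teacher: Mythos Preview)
Your proof is correct and follows essentially the same route as the paper: invoke Weil's factorisation $L(T,\chi)=\prod_j(1-\alpha_jT)$ with $|\alpha_j|\leq q^{1/2}$, expand each factor binomially, bound $\left|\binom{z}{k}\right|\leq\binom{A+k-1}{k}$, and recognise the resulting convolution as the $T^n$-coefficient of $(1-T)^{-A(m-1)}$. The only cosmetic difference is that the paper writes exactly $m-1$ factors (allowing some $\alpha_j=0$) rather than $N\leq m-1$ nonzero ones, which comes to the same thing.
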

\begin{proof}
From Proposition 4.3 of \cite{Rosen}, we know that for $\chi \neq \chi_0$ we have
$$ L(T, \chi) = \sum_{j=1}^{m-1} \left(\sum_{f \in \M_j} \chi(f) \right) T^j = \prod_{j=1}^{m-1} (1-\alpha_jT)$$
where $|\alpha_j|$ is $0, 1$ or $\sqrt{q}$ as a consequence of Weil's Theorem (the `Riemannn Hypothesis' for curves over $\mathbb{F}_q$). \\
\\
Now, from the binomial theorem we get
$$ L(T, \chi)^z = \prod_{j=1}^{m-1} (1-\alpha_jT)^z = \sum_{n\geq 0} \left(\sum_{r_1 + \ldots + r_{m-1} = n} \binom{z}{r_1} \ldots \binom{z}{r_{m-1}} \alpha_1^{r_1} \ldots \alpha_{m-1}^{r_{m-1}} \right)(-1)^nT^n.  $$
Using that  $|\alpha_j| \leq \sqrt{q}$ and $ |z| \leq A$ we get that
\begin{align*}
|D_z(n,\chi)| &= \left| \sum_{r_1 + \ldots + r_{m-1} = n} \binom{z}{r_1} \ldots \binom{z}{r_{m-1}} \alpha_1^{r_1} \ldots \alpha_{m-1}^{r_{m-1}} \right| \\
& \leq \sum_{r_1 + \ldots + r_{m-1} = n} \left|\binom{z}{r_1}\right| \ldots \left|\binom{z}{r_{n-1}}\right| \sqrt{q}^{r_1 + \ldots + r_{m-1}} \\
& \leq q^{n/2} \sum_{r_1 + \ldots + r_{m-1} =n} \binom{A + r_1 - 1}{r_1} \ldots \binom{A + r_{m-1} - 1}{r_{m-1}}.
\end{align*}
Now, we recognise the sum as the coefficient of $T^n$ in the expansion of $$ ((1-T)^{-A})^{m-1} = (1-T)^{-A(m-1)} $$
which is also $\binom{n + A(m-1)-1}{n} = \binom{n+Am-(A+1)}{n} $. Indeed, this shows that the power series expansion of $L(T, \chi)^z$ is majorised by that of $(1-\sqrt{q}T)^{-A(m-1)}$. Since $ m, n \geq 1  $ we get that
$$|D_z(n,\chi)| \leq  q^{n/2} \binom{n+Am-(A+1)}{n} \leq q^{n/2} \binom{n+Am}{n}.$$
\end{proof}

\subsection{Formulae for $\Pi_k(n, \chi)$}
We are now interested in $\Pi_k$ twisted by a character, which we define as
$$\Pi_k(n, \chi) := \sum_{\substack{ f \in \M_n \\ \omega(f) = k }} \mu^2(f)\chi(f) $$
which, by analogy to Section \ref{SS-Section}, we relate to the generating function 
$$A(T, z, \chi) := \sum_{f \in \M} \mu^2(f) z^{\omega(f)} \chi(f) T^{\deg f} = \prod_{p \in \I} (1+z\chi(p)T^{\deg p})$$ 
whose power series coefficients are
$$A_z(n, \chi) := \sum_{f \in \M_n} \mu^2(f) \chi(f)   z^{\omega(f)}$$
so that, similarly to before
$$\sum_{k\geq 0}\Pi_k(n, \chi) z^k = A_z(n, \chi)$$
and by Cauchy's Theorem
$$ \Pi_k(n, \chi) = \frac{1}{2\pi i}\oint \frac{A_z(n, \chi)}{z^{k+1}}dz.$$ \\

Moreover, recall that we had
$$ F(T,z)=\sum_{f \in \M}b_z(f) T^{\deg f}= \prod_{p \in \I} (1+zT^{\deg p})(1-T^{\deg p})^z = A(T,z)\zeta(T)^{-z} $$
so we naturally define $F(T,z, \chi)$ by
$$ F(T,z, \chi) := \sum_{f \in \M}b_z(f)\chi(f) T^{\deg f} = \prod_{p \in \I} (1+\chi(p)zT^{\deg p})(1-\chi(p)T^{\deg p})^z = A(T, z, \chi)  L(T, \chi)^{-z} $$ 
and let $B_z(n, \chi) := \sum_{f \in \M_n}b_z(f)\chi(f)$ so that $ A_z(m, \chi)=\sum_{a+b = m}B_z(a, \chi)D_z(b, \chi) $.

\subsubsection{Non-principal characters}

In this subsection, $\chi$ will be a non-principal character.

\begin{lem} \label{B-Conv} For $|z|\leq A$ and $n \geq 2$
$$\sum_{0 \leq a \leq n}\frac{|B_z(a, \chi)|}{q^{a/2}} \leq  n^{c_A}  $$
where $c_A$ is a constant depending on A.
\end{lem}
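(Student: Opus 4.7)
The plan is to reduce this to Proposition \ref{B-Conv1} by observing that the character is bounded in modulus by $1$, so the twisted sum can be dominated by the untwisted one. Specifically, since $|\chi(f)| \leq 1$ for all $f \in \M$, we have
\[
|B_z(a, \chi)| = \left|\sum_{f \in \M_a} b_z(f)\chi(f)\right| \leq \sum_{f \in \M_a} |b_z(f)|,
\]
and therefore
\[
\sum_{0 \leq a \leq n}\frac{|B_z(a,\chi)|}{q^{a/2}} \leq \sum_{f \in \M_{\leq n}} \frac{|b_z(f)|}{q^{(\deg f)/2}}.
\]

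The right-hand side is precisely the quantity estimated in the proof of Proposition \ref{B-Conv1}. Indeed, inspecting that proof shows that the argument actually bounds $\sum_{f \in \M_{\leq n}} |b_z(f)|/q^{\sigma \deg f}$ (not merely the smaller quantity $\sum_{a}|B_z(a)|/q^{\sigma a}$), via the Euler product estimate
\[
\sum_{f \in \M_{\leq n}}\frac{|b_z(f)|}{q^{\sigma \deg f}} \leq \prod_{p \in \I_{\leq n}}\left(1+\frac{3M_A/2}{q^{\sigma \deg p}(q^{\sigma \deg p}-\sqrt{3/2})}\right),
\]
followed by logarithmic and prime polynomial theorem bounds. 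At $\sigma = 1/2$ this yielded $n^{c_A}$, which is exactly the bound claimed here.

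Thus the proof is essentially a one-line invocation of Proposition \ref{B-Conv1} with $\sigma = 1/2$ after the trivial majorisation by $|\chi| \leq 1$. There is no real obstacle: the work was already done in the untwisted setting, and since the non-principal character contributes no extra cancellation to $|b_z(f)\chi(f)|$, we simply inherit the same bound. Note in particular that, unlike the situation for $D_z(n,\chi)$ where we needed Weil's bound to exploit cancellation among character values, here we only require the pointwise bound $|\chi(f)| \leq 1$.
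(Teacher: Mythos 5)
Your proof is correct and is essentially identical to the paper's: the paper likewise bounds $\sum_{0\leq a\leq n}|B_z(a,\chi)|/q^{a/2}$ by $\sum_{f\in\M_{\leq n}}|b_z(f)|/q^{\deg f/2}$ using $|\chi(f)|\leq 1$, and then invokes the proof (not just the statement) of Proposition \ref{B-Conv1} at $\sigma=1/2$ to get $n^{c_A}$. Your added remark that no Weil-type cancellation is needed here, in contrast to the bound for $D_z(n,\chi)$, is a correct and fair observation but not a departure from the paper's argument.
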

\begin{proof}
$$ \sum_{0\leq a \leq n}\frac{|B_z(a, \chi)|}{q^{a/2}} \leq \sum_{f \in \M_{\leq n}}\frac{|b_z(f)|}{q^{\deg f/2}} \leq  n^{c_A}  $$
by the proof of Proposition \ref{B-Conv1}.
\end{proof}

We can use this to get an estimate for $A_z(n, \chi)$ as follows:

\begin{prop} \label{A_z-NP}
For $A > 1$ and $n \geq 2$
$$A_z(n, \chi) \leq q^{n/2} \binom{n + Am}{n} n^{c_A}.$$
\end{prop}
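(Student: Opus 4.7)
The plan is to use the convolution identity $A_z(n,\chi) = \sum_{a+b=n} B_z(a,\chi) D_z(b,\chi)$, which comes from the factorisation $A(T, z, \chi) = F(T,z,\chi) \cdot L(T,\chi)^z$ noted just before the statement. This reduces the estimate on $A_z(n,\chi)$ to combining the two pointwise bounds already established: Proposition~\ref{D_z-NP} for the $D_z(b,\chi)$ factor and Lemma~\ref{B-Conv} for summing the $B_z(a,\chi)$ factor.

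Concretely, I would begin by applying the triangle inequality to the convolution and then invoking Proposition~\ref{D_z-NP} to bound
\[
|D_z(b,\chi)| \leq q^{b/2} \binom{b + Am}{b}
\]
for each $0 \leq b \leq n$ (the bound also holds trivially for $b=0$ since $D_z(0,\chi) = 1$). Since $b \leq n$ and $\binom{\cdot + Am}{\cdot}$ is increasing in its upper argument, I can uniformly replace $\binom{b+Am}{b}$ by $\binom{n+Am}{n}$ and pull it out of the sum. What remains is
\[
|A_z(n,\chi)| \leq \binom{n+Am}{n} \sum_{a+b = n} |B_z(a,\chi)| q^{b/2} = q^{n/2} \binom{n+Am}{n} \sum_{0 \leq a \leq n} \frac{|B_z(a,\chi)|}{q^{a/2}},
\]
after factoring out $q^{n/2}$. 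Finally, Lemma~\ref{B-Conv} bounds the remaining sum by $n^{c_A}$, yielding the claimed estimate.

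I do not anticipate any real obstacle: the bookkeeping is routine once the convolution is written down, and the monotonicity of the binomial coefficient in $b$ is precisely what allows the product-of-bounds argument to go through cleanly. The only mild subtlety is ensuring the formula $L(T,\chi)^z = \sum_b D_z(b,\chi) T^b$ is interpreted with $D_z(0,\chi) = 1$ so that the $b=0$ term is handled, but this is immediate from the definition.
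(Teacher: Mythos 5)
Your proposal is correct and follows essentially the same route as the paper: the paper also expands the convolution $A_z(n,\chi)=\sum_{0\leq a\leq n}B_z(a,\chi)D_z(n-a,\chi)$, bounds each $D_z(n-a,\chi)$ by $q^{(n-a)/2}\binom{n-a+Am}{n-a}$ via Proposition \ref{D_z-NP}, replaces the binomial coefficient by $\binom{n+Am}{n}$ using its monotonicity, and finishes with Lemma \ref{B-Conv}. Your remark about handling the $b=0$ term with $D_z(0,\chi)=1$ is a fine point the paper leaves implicit, and the monotonicity you invoke is justified since $\binom{b+Am}{b}=\prod_{j=1}^{b}\left(1+\frac{Am}{j}\right)$ is increasing in $b$.
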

\begin{proof} Using Proposition \ref{D_z-NP} and Lemma \ref{B-Conv} we get
\begin{align*} 
A_z(n;\chi) &= \sum_{0\leq a \leq n}B_z(a, \chi) D_z(n-a, \chi) \\
&\leq q^{n/2}\sum_{0\leq a \leq n}\frac{|B_z(a,\chi)|}{q^{a/2}}\binom{n-a + Am}{n-a} \\
&\leq q^{n/2} \binom{n + Am}{n} \sum_{0\leq a \leq n} \frac{|B_z(a,\chi)|}{q^{a/2}} \leq q^{n/2} \binom{n + Am}{n} n^{c_A}.
\end{align*}
\end{proof}

We can now use Cauchy's Theorem to bound $\Pi_k(m ;\chi)$.
\begin{prop} For $A > 1$  and $n \geq 2$
$$ \Pi_k(n ; \chi) \leq q^{n/2} \binom{n + Am}{n} n^{c_A}. $$
\end{prop}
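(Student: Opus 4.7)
The proof is essentially a one-line application of Cauchy's theorem combined with the bound we just established in Proposition \ref{A_z-NP}. Since $A > 1$, the circle $|z| = 1$ lies inside the disc $|z| \leq A$, so we may take it as our contour of integration in
\[ \Pi_k(n, \chi) = \frac{1}{2\pi i}\oint \frac{A_z(n, \chi)}{z^{k+1}}\,dz. \]

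The plan is: parametrise $z = e^{i\theta}$, so $|dz| = d\theta$ and $|z^{k+1}| = 1$. The trivial estimate then gives
\[ |\Pi_k(n, \chi)| \leq \frac{1}{2\pi}\int_0^{2\pi} |A_{e^{i\theta}}(n, \chi)|\,d\theta \leq \max_{|z|=1} |A_z(n, \chi)|. \]
By Proposition \ref{A_z-NP}, which is uniform for $|z| \leq A$, this maximum is bounded by $q^{n/2}\binom{n+Am}{n}n^{c_A}$, giving the claimed inequality.

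There is no real obstacle here: the heavy lifting (the Weil bound for non-principal character $L$-functions, majorisation by $(1-\sqrt{q}T)^{-A(m-1)}$, and the Euler product estimate for $\sum |B_z(a,\chi)|q^{-a/2}$) has already been done in Propositions \ref{D_z-NP}, \ref{B-Conv} and \ref{A_z-NP}. The only thing worth remarking is that because $A_z(n, \chi)$ is bounded uniformly on the entire disc $|z|\leq A$ and the bound in Proposition \ref{A_z-NP} has no $k$-dependence, one could actually integrate on $|z|=A$ to pick up an extra factor of $A^{-k}$; but the weaker statement with contour $|z|=1$ suffices for the proposition as stated and is all that will be needed in the sequel when this bound is combined with orthogonality of characters to handle the non-principal contribution to $\Pi_k(n; g, d)$.
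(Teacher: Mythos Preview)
Your proof is correct and follows essentially the same route as the paper: integrate the Cauchy representation of $\Pi_k(n,\chi)$ over the circle $|z|=1$ and bound $|A_z(n,\chi)|$ uniformly using Proposition \ref{A_z-NP}. The paper's argument is identical, just phrased slightly more tersely.
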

\begin{proof}
Recall the identity $$ \Pi_k(n ; \chi) = \frac{1}{2\pi i}\oint \frac{A_z(n;\chi)}{z^{k+1}}dz$$ where we take the contour to be the circle of radius $r=1$ centred at 0. \\
Then Proposition \ref{A_z-NP} gives us that this is
 $$\leq q^{n/2} \binom{n + Am}{n} n^{c_A} \frac{1}{2\pi}\oint \frac{|dz|}{|z|^{k+1}} \leq q^{n/2} \binom{n + Am}{n}n^{c_A} .$$
\end{proof}

\subsubsection{The principal character}

\begin{defn} Define $F_d$, $B^d_z$ and $b^d_z$ via the following formal power series equalities
$$ F_d(T, z) = \sum_{n\geq0} B^d_z(n) T^n =  \sum_{f \in \M}b^d_z(f) T^{\deg f} =  \prod_{p \nmid d} (1+zT^{\deg p})(1-T^{\deg p})^{z}\prod_{p|d} (1-T^{\deg{p}})^{z}.$$
\end{defn}

\begin{lem} \label{B-Conv-0} For $|z|\leq A$ and  $\sigma \geq \frac{2}{3}$ 
$$\sum_{a \geq 0}\frac{|B^d_z(a)|}{q^{\sigma a}} \ll_{A} \prod_{p|d} (1-q^{-\sigma \deg p})^{-A}.$$
\end{lem}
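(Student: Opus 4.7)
The plan is to exploit the multiplicativity of $b^d_z$ to factor the Dirichlet-type sum as an Euler product, and then handle the primes dividing $d$ separately from those not dividing $d$.

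First I would majorise the sum by the full absolute Euler product:
\[
\sum_{a \geq 0}\frac{|B^d_z(a)|}{q^{\sigma a}} \;\leq\; \sum_{f \in \M}\frac{|b^d_z(f)|}{q^{\sigma \deg f}} \;=\; \prod_{p\nmid d}\Bigl(\sum_{k\geq 0}\frac{|b^d_z(p^k)|}{q^{\sigma k\deg p}}\Bigr)\prod_{p\mid d}\Bigl(\sum_{k\geq 0}\frac{|b^d_z(p^k)|}{q^{\sigma k\deg p}}\Bigr),
\]
which is legitimate once we know both products converge.

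For primes $p\nmid d$, the local factor $(1+zS)(1-S)^{z}$ is exactly that appearing in Proposition \ref{B-Conv1}, so $b^d_z(p^k)=b_z(p^k)$ and the same Cauchy-inequality/prime-polynomial-theorem argument used there shows
\[
\prod_{p\nmid d}\Bigl(\sum_{k\geq 0}\frac{|b^d_z(p^k)|}{q^{\sigma k\deg p}}\Bigr) \;\leq\; \prod_{p\in\I}\Bigl(\sum_{k\geq 0}\frac{|b_z(p^k)|}{q^{\sigma k\deg p}}\Bigr) \;\ll_{A}\; 1
\]
uniformly for $\sigma\geq 2/3$ (using that $c_{A,\sigma}$ in Proposition \ref{B-Conv1} is monotone decreasing in $\sigma$, so $c_{A,\sigma}\leq c_{A,2/3}$).

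For primes $p\mid d$, the local factor is the pure binomial series $(1-S)^{z}=\sum_{k\geq 0}\binom{z}{k}(-S)^{k}$, so $b^d_z(p^k)=(-1)^k\binom{z}{k}$. The elementary estimate
\[
\Bigl|\binom{z}{k}\Bigr| \;=\; \Bigl|\tfrac{z(z-1)\cdots(z-k+1)}{k!}\Bigr| \;\leq\; \tfrac{A(A+1)\cdots(A+k-1)}{k!} \;=\; \binom{A+k-1}{k}
\]
for $|z|\leq A$ then gives, by the generalised binomial theorem,
\[
\sum_{k\geq 0}\frac{|b^d_z(p^k)|}{q^{\sigma k\deg p}} \;\leq\; \sum_{k\geq 0}\binom{A+k-1}{k}q^{-\sigma k\deg p} \;=\; (1-q^{-\sigma\deg p})^{-A}.
\]
Multiplying these local bounds over $p\mid d$ and combining with the uniformly bounded contribution from $p\nmid d$ yields the stated estimate. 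The only real obstacle is the $p\nmid d$ factor, which is resolved by direct appeal to Proposition \ref{B-Conv1}; the $p\mid d$ factor is an immediate computation.
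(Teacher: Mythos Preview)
Your proposal is correct and follows essentially the same approach as the paper: majorise by the absolute Euler product, split into $p\mid d$ and $p\nmid d$, handle the $p\mid d$ factor via the binomial bound $|\binom{z}{k}|\leq\binom{A+k-1}{k}$ to get $(1-q^{-\sigma\deg p})^{-A}$, and bound the $p\nmid d$ factor by the full product over all primes using (the proof of) Proposition~\ref{B-Conv1}. Your remark on the monotonicity of $c_{A,\sigma}$ is a nice touch to make the uniformity in $\sigma\geq 2/3$ explicit, but otherwise the two arguments are the same.
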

\begin{proof}
By making a change of variable $ S = T^{\deg p} $, we see that the multiplicative coefficients $ b^d_z(f) $ are defined on prime powers $f=p^k$ by the formal power series identity 
$$1+ \sum_{k\geq 1}b^d_z(p^k)S^k = \begin{cases} (1-S)^{z} &\text{\qquad if } p|d  \\ (1+zS)(1-S)^z &\text{\qquad if } p \nmid d. \end{cases}$$ 
So if $ p|d $, we have that $|b^d_z(p^k)|=|\binom{z}{k}|\leq\binom{A+k-1}{k}$, and if $p \nmid d$ we have that $b^d_z(p^k)=b_z(p)$. Therefore, we get
\begin{align*}
\sum_{a \geq 0}\frac{|B^d_z(a)|}{q^{\sigma a}} &\leq \sum_{f \in \M}\frac{|b^d_z(f)|}{q^{\sigma \deg f}} \\
&\leq \prod_{p|d} \left(1+\sum_{k \geq 1}\frac{|b^d_z(p^k)|}{q^{k\sigma \deg p}}\right)  \prod_{p\nmid d}\left(1+\sum_{k \geq 1}\frac{|b^d_z(p^k)|}{q^{k\sigma \deg p}}\right) \\
&\leq \prod_{p|d} \left(\sum_{k \geq 0} \binom{A+k-1}{k} {q^{-k\sigma \deg p}}\right) \prod_{p \in \I}\left(1+\sum_{k \geq 1}\frac{|b_z(p^k)|}{q^{k\sigma \deg p}}\right) \\
&=\prod_{p|d} (1-q^{-\sigma \deg p})^{-A}\sum_{f \in \M}\frac{|b_z(f)|}{q^{\sigma \deg f}}.
\end{align*}
Now, by the proof of Proposition \ref{B-Conv1}, $ \sum_{f \in \M}\frac{|b_z(f)|}{q^{\sigma \deg f}} \ll_A 1$ for $\sigma \geq \frac{2}{3}$, which gives the result.
\end{proof}

\begin{lem} \label{PhiEst} For $d \in \mathbb{F}_q[X]$ of degree $m \geq 1$ and $1 \geq \sigma > \frac{1}{2}$, we have 
$$ \prod_{p|d} (1-q^{-\sigma \deg p})^{-1} \leq (2+2\log m)^{8(qm)^{1-\sigma}}. $$
\end{lem}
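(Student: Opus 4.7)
The plan is to take the logarithm of both sides and show
\[
\sum_{p|d}-\log(1-q^{-\sigma\deg p}) \leq 8(qm)^{1-\sigma}\log(2+2\log m).
\]
Since $q\geq 2$ and $\sigma\geq 1/2$, each $q^{-\sigma\deg p}\leq q^{-\sigma}\leq 2^{-1/2}$; and since $x\mapsto -\log(1-x)/x$ is increasing on $(0,1)$, one obtains $-\log(1-q^{-\sigma\deg p})\leq C_0\, q^{-\sigma\deg p}$ with the absolute constant $C_0:=\sqrt{2}\log(2+\sqrt{2})$. Thus it suffices to bound
\[
S := \sum_{p|d} q^{-\sigma\deg p} = \sum_{k\geq 1}\omega_k(d)\, q^{-\sigma k},
\]
where $\omega_k(d):=\#\{p\in\I : p|d,\ \deg p = k\}$ counts the distinct prime divisors of $d$ of each degree.

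Two elementary bounds apply to $\omega_k(d)$: the prime polynomial theorem gives $\omega_k(d)\leq q^k/k$, and the constraint $\sum_k k\,\omega_k(d)\leq \deg d = m$ gives $\omega_k(d)\leq m/k$. Taking $K:=\lfloor\log_q m\rfloor$ (so $q^K\leq m<q^{K+1}$), the PPT bound is sharper for $k\leq K$ and the degree bound for $k>K$, so
\[
S \leq \sum_{k=1}^{K}\frac{q^{(1-\sigma)k}}{k} + m\sum_{k>K}\frac{q^{-\sigma k}}{k}.
\]
For the first sum, since $1-\sigma\geq 0$, one bounds $q^{(1-\sigma)k}\leq q^{(1-\sigma)K}=(q^K)^{1-\sigma}\leq m^{1-\sigma}$ and uses $\sum_{k=1}^K 1/k\leq 1+\log K$ to get at most $m^{1-\sigma}(1+\log K)$. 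For the second sum, pulling $1/(K+1)$ out of each term and summing geometrically (using $1/(1-q^{-\sigma})\leq 2+\sqrt{2}$ and $q^{-\sigma(K+1)}<m^{-\sigma}$) gives $O(m^{1-\sigma}/(K+1))$.

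Finally, $K+1\leq 1+\log_q m\leq 2+2\log m$ for $q\geq 2$ and $m\geq 1$, so $\log K\leq \log(2+2\log m)$, and combining the two pieces yields $S\leq C' m^{1-\sigma}\log(2+2\log m)$ for some absolute $C'$. Multiplying by $C_0$ and using $m^{1-\sigma}\leq (qm)^{1-\sigma}$ (since $q^{1-\sigma}\geq 1$) gives the claimed inequality with some absolute constant in place of $8$. The main obstacle will be showing that this constant is indeed no larger than $8$: a cruder bound such as $-\log(1-x)\leq x/(1-x)$ gives a constant that exceeds $8$, so one must work with the sharper $C_0=\sqrt{2}\log(2+\sqrt{2})\approx 1.74$ and carefully track how $\log K$ and $1/(K+1)$ compare to $\log(2+2\log m)$. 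The edge case $m<q$ (so $K=0$ and the first sum is empty) is handled directly via $-\log(1-q^{-\sigma})\leq C_0 q^{-\sigma}\leq C_0 m^{-\sigma}$; for $m=1$ one simply observes that $(1-q^{-\sigma})^{-1}\leq 2+\sqrt{2}$ is much smaller than $2^{8q^{1-\sigma}}\geq 2^8$.
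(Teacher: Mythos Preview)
Your approach is sound and will prove the lemma, but it follows a genuinely different route from the paper's argument. The paper uses an extremal replacement: it orders the primes $P_1,P_2,\ldots$ of $\M$ by degree, chooses $N$ so that $\sum_{\deg P\leq N-1}\deg P<m\leq\sum_{\deg P\leq N}\deg P$, observes that then $d$ has at most $\#\{P:\deg P\leq N\}$ prime factors and hence $\prod_{p|d}(1-q^{-\sigma\deg p})^{-1}\leq\prod_{\deg P\leq N}(1-q^{-\sigma\deg P})^{-1}$, and finally bounds the logarithm of the right-hand side by $8q^{(1-\sigma)N}\log(1+N)$ via $-\log(1-1/x)\leq 1/(x-1)$ and the prime polynomial theorem. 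The choice of $N$ forces $q^N\leq qm$, which is where the factor $(qm)^{1-\sigma}$ comes from.

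You instead bound $\omega_k(d)\leq\min(q^k/k,\,m/k)$ degree by degree and split at $K=\lfloor\log_q m\rfloor$, so no extremal step is needed but two regimes must be handled. Both arguments ultimately rest on the prime polynomial theorem. Regarding your stated ``main obstacle'': the constant $8$ does close with your method. With $C_0=\sqrt{2}\log(2+\sqrt{2})\approx 1.74$, the bracket $(1+\log K)+(2+\sqrt{2})/(K+1)$ stays below about $3$ for small $K$ while $\log(2+2\log m)\geq\log(2+2K\log 2)\geq 1.22$ already at $K=1$, so the ratio remains comfortably below $8/C_0\approx 4.6$; the $K=0$ case checks as you indicate via $m^\sigma<q$. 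In any event, for the paper's purposes only an absolute constant is needed, as it is absorbed into the constant $K_A$ of the next proposition.
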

\begin{proof}
Arrange the primes $p_1,\ldots,p_r$ dividing $d$ and the primes $P_1,\ldots$ in $\M $, in order of degree (where you can order those of the same degree arbitrarily). Then we must have that $ \deg P_i \leq \deg p_i $. \\
Now, for some $ N \in \mathbb{N} $, we have that $ \sum_{P: \deg P \leq N-1} \deg P < m \leq \sum_{P: \deg P \leq N} \deg P $. This means that $d$ has at most $ \# \{P: \deg P \leq N\} $ prime factors, and so, by the observation in the paragraph above
$$ \prod_{p|d} (1-q^{-\sigma \deg p})^{-1}  \leq \prod_{P:\deg P \leq N} (1-q^{-\sigma \deg P})^{-1} . $$
Taking the logarithm of the right hand side, and using the fact that $ -\log(1-\frac{1}{x}) \leq \frac{1}{x-1} $ for $x>1$, combined with the prime polynomial theorem, we get
$$\sum_{P:\deg P \leq N} -\log(1-q^{-\sigma \deg p}) \leq \sum_{r \leq N} \frac{\Pi(r)}{q^{\sigma r}-1} \leq 4\sum_{r \leq N} \frac{\Pi(r)}{q^{\sigma r}} \leq 4\sum_{r \leq N} \frac{q^{(1-\sigma)r}}{r} \leq 8q^{(1-\sigma)N}(\log (1+N))$$
where $\Pi(n) = \Pi_1(n) = \#\{f \in \M_n : f \text{ is prime}\}$.
Our choice of $N$ tells us that $q^N \leq qm $ (so $ N \leq (1+2\log m)$), since we have from the prime polynomial theorem that
$$m > \sum_{P: \deg P \leq N-1} \deg P = \sum_{r \leq N-1} \Pi(r) r \geq \sum_{r | N-1} \Pi(r) r = q^{N-1}.$$
Putting everything together we get that
$$ \prod_{p|d} (1-q^{-\sigma \deg p})^{-1} \leq \exp(8q^{(1-\sigma)N}(\log (1+N))) \leq (2+2\log m)^{8(qm)^{1-\sigma}}. $$
\end{proof}

\begin{prop} \label{B-Conv-Cor} For $|z|\leq A$  we have that $$\sum_{a\geq 0}\frac{|B^d_z(a)|}{q^{a}}a^{2A+2} \ll_{A}  (1 + \log m)^{K_A} $$ where $K_A$ is a constant depending on A.
\end{prop}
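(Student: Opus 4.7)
The idea is to interpolate between Lemma \ref{B-Conv-0} and the polynomial factor $a^{2A+2}$ by splitting the weight $q^{-a}=q^{-\sigma a}\cdot q^{-(1-\sigma)a}$ and choosing $\sigma$ close to $1$ so that the factor $(qm)^{1-\sigma}$ appearing in Lemma \ref{PhiEst} stays bounded.

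First I would observe the elementary inequality
$$ a^{2A+2} q^{-(1-\sigma)a} \leq \left(\frac{2A+2}{e(1-\sigma)\log q}\right)^{2A+2} \ll_A \frac{1}{(1-\sigma)^{2A+2}}, $$
valid for any $\sigma<1$, obtained by maximising the left hand side over $a\geq 0$ by calculus. Combining this with Lemma \ref{B-Conv-0} gives
$$ \sum_{a\geq 0}\frac{|B^d_z(a)|}{q^a}a^{2A+2} \ll_A \frac{1}{(1-\sigma)^{2A+2}}\sum_{a\geq 0}\frac{|B^d_z(a)|}{q^{\sigma a}} \ll_A \frac{1}{(1-\sigma)^{2A+2}}\prod_{p|d}(1-q^{-\sigma \deg p})^{-A} $$
for any $\sigma\geq 2/3$.

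Next I would choose $\sigma = 1 - \frac{1}{\log(qm)}$, which satisfies $\sigma\geq 2/3$ as soon as $m$ is larger than some absolute constant (the finitely many remaining small values of $m$ can be absorbed into the implied constant). With this choice $(qm)^{1-\sigma}=e$ is a bounded absolute constant, so Lemma \ref{PhiEst} applied to the product of $A$-th powers gives
$$ \prod_{p|d}(1-q^{-\sigma \deg p})^{-A} \leq (2+2\log m)^{8eA}. $$
Meanwhile $(1-\sigma)^{-(2A+2)} = (\log(qm))^{2A+2} \ll (\log m)^{2A+2}$ (the $q$ contribution is harmless since the bound is uniform in $q\geq 2$, using $\log(qm)\leq \log q\cdot(1+\log m)$ and that the whole estimate depends only on the exponent). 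Multiplying the two bounds yields a final estimate of the form $(1+\log m)^{K_A}$ for a suitable constant $K_A$ depending only on $A$, as required.

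The only real obstacle is getting the two $m$-dependent factors to combine cleanly: one needs $(1-\sigma)^{-1}$ small enough to keep the calculus factor polylogarithmic, but large enough that $(qm)^{1-\sigma}$ remains bounded so that Lemma \ref{PhiEst} produces only a polylogarithmic contribution. The choice $\sigma=1-1/\log(qm)$ balances these two requirements, and after that the rest is bookkeeping.
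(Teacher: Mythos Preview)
Your approach is essentially the same as the paper's: split $q^{-a}=q^{-\sigma a}q^{-(1-\sigma)a}$, absorb $a^{2A+2}$ into the second factor, then apply Lemmas \ref{B-Conv-0} and \ref{PhiEst} at a value of $\sigma$ just below $1$. The paper also treats small $m$ separately, exactly as you suggest.

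There is one slip. After writing the calculus bound $a^{2A+2}q^{-(1-\sigma)a}\leq\bigl(\tfrac{2A+2}{e(1-\sigma)\log q}\bigr)^{2A+2}$, you immediately weaken it to $\ll_A (1-\sigma)^{-(2A+2)}$ by dropping the $\log q$, and then at the end assert $(\log(qm))^{2A+2}\ll(\log m)^{2A+2}$. That last step is false uniformly in $q$: if $q$ is large compared to $m$ then $\log(qm)$ is not controlled by $\log m$. The fix is simply not to discard the $\log q$: keeping it, your factor becomes $\bigl(\tfrac{\log(qm)}{\log q}\bigr)^{2A+2}=\bigl(1+\tfrac{\log m}{\log q}\bigr)^{2A+2}\leq\bigl(1+\tfrac{\log m}{\log 2}\bigr)^{2A+2}\ll_A(1+\log m)^{2A+2}$, which is what you want. (Your parenthetical ``$\log(qm)\leq\log q\cdot(1+\log m)$'' is aiming at this but is stated incorrectly and fails for $q=2$.)

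For comparison, the paper sidesteps this bookkeeping by choosing $1-\sigma=\tau=\tfrac{2A+2}{\log m\,\log q}$ rather than your $1/\log(qm)$, and bounding $a^{2A+2}\leq(\log m)^{4A+4}q^{\tau a}$ by a direct case split at $a=(\log m)^2$ instead of calculus. With that choice the prefactor $(\log m)^{4A+4}$ is manifestly $q$-free, and one checks $(qm)^{\tau}=e^{(2A+2)(1/\log m+1/\log q)}\ll_A 1$ so Lemma \ref{PhiEst} still gives a polylogarithmic bound. Either choice works once the $q$-dependence is tracked correctly.
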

\begin{proof} When $\log m < 10A+10$ it suffices to show that $\sum_{a\geq 0}\frac{|B^d_z(a)|}{q^{a}}a^{2A+2} \ll_{A} 1 $. \\
This is indeed true in this case, since $ m \ll_A 1 $, and so by Lemma \ref{B-Conv-0} we have that for $ \sigma \geq \frac{2}{3}$
$$\sum_{a \geq 0}\frac{|B^d_z(a)|}{q^{\sigma a}} \ll_{A,\sigma} \prod_{p|d} (1-q^{-\sigma \deg p})^{-A} \ll_{A,\sigma} (1-q^{-\sigma})^{-Am} \ll_{A,\sigma} 1$$
and consequently that $\sum_{a\geq 0}\frac{|B^d_z(a)|}{q^{a}}a^{2A+2} \ll_{A} 1 $. \\ \\
When $\log m \geq 10A+10$, let $\tau = \frac{2A+2}{\log m \log q} \leq \frac{1}{5\log 2} \leq \frac{1}{3} $ so that $ 1 - \tau \geq \frac{2}{3} $ and moreover
$$ a \geq (\log m)^2 \implies (2A+2) \frac{\log a}{a} \leq (2A+2) \frac{2\log \log m}{(\log m)^2} \leq \frac{2A+2}{\log m} = \tau\log q \implies a^{2A+2} \leq q^{\tau a}.$$
So overall we have that $ a^{2A+2} \leq (\log m)^{4A+4} q^{\tau a} $.
Using this fact and Lemmas \ref{B-Conv-0} and \ref{PhiEst} we get that
\begin{align*}
\sum_{a\geq 0}\frac{|B^d_z(a)|}{q^{a}}a^{2A+2}  &\leq (\log m)^{4A+4} \sum_{a\geq 0}\frac{|B^d_z(a)|}{q^{(1-\tau)a}} \\
&\ll_A  (\log m)^{4A+4} \prod_{p|d} (1-q^{-(1-\tau) \deg p})^{-A} \\
&\ll_A  (\log m)^{4A+4}(2(1+\log m))^{8(qm)^{\tau}}\\
&\ll_A (1+\log m)^{K_A}.
\end{align*} 
\end{proof}

\begin{prop} \label{General-B-Prop}
Uniformly for $|z|\leq A$ and $n\geq 1$, we have 
\begin{align*} A_z(n, \chi_0) &=  q^n\frac{n^{z-1}}{\Gamma(z)}F_d(1/q,z) + O_{A}(q^n n^{\Re z-2}(1+\log m)^{K_A})  \\
&= \left(\prod_{p|d} \left(1+\frac{z}{q^{\deg p}}\right)^{-1} \right) F(1/q,z) q^n\frac{n^{z-1}}{\Gamma(z)} + O_{A}(q^n n^{\Re z-2}(1+\log m)^{K_A}).
\end{align*}
\end{prop}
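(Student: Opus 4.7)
The plan is to follow the same template used to prove Corollary \ref{A_z} from Theorem \ref{SS}, but with $\zeta(T)^z$ replaced by $L(T,\chi_0)^z$ and $F(T,z)$ replaced by $F_d(T,z)$. The key algebraic input is that, since $L(T,\chi_0)=\zeta(T)\prod_{p\mid d}(1-T^{\deg p})$, the Euler products expand to
\[
A(T,z,\chi_0)=\prod_{p\nmid d}(1+zT^{\deg p})=\Bigl(\prod_{p\nmid d}(1+zT^{\deg p})(1-T^{\deg p})^z\Bigr)\prod_{p\mid d}(1-T^{\deg p})^z\cdot\prod_{p\in\I}(1-T^{\deg p})^{-z},
\]
i.e.\ $A(T,z,\chi_0)=F_d(T,z)\,\zeta(T)^z$. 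This is exactly the factorisation $C(T,z)=M(T,z)\zeta(T)^z$ required to apply Proposition \ref{ModD}, with $C_z(n)=A_z(n,\chi_0)$ and $M_z(n)=B^d_z(n)$.

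Next I would verify (a weakened form of) the convergence hypothesis $(\star)$: Proposition \ref{B-Conv-Cor} gives, uniformly for $|z|\leq A$,
\[
\sum_{a\geq 0}\frac{|B^d_z(a)|}{q^a}a^{2A+2}\ll_A (1+\log m)^{K_A},
\]
which is the analogue of $(\star)$ but with the implicit constant replaced by $(1+\log m)^{K_A}$. Since the proof of Proposition \ref{ModD} uses hypothesis $(\star)$ only linearly (the bound on the LHS of $(\star)$ appears as a multiplicative constant in each error term), rerunning that proof verbatim yields
\[
A_z(n,\chi_0)=q^n\frac{n^{z-1}}{\Gamma(z)}F_d(1/q,z)+O_A\bigl(q^n n^{\Re z-2}(1+\log m)^{K_A}\bigr),
\]
which is the first displayed equation of the proposition. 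I expect the only mildly technical point to be checking that every step of the proof of Proposition \ref{ModD} indeed only consumes the convergence bound once, so that the $m$-dependence collects into a single factor of $(1+\log m)^{K_A}$ rather than something worse.

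Finally, to match the second displayed form, I would simply compare Euler products at $T=1/q$: factoring the primes dividing $d$ out of the expression for $F_d(1/q,z)$ and comparing with $F(1/q,z)=\prod_{p\in\I}(1+zq^{-\deg p})(1-q^{-\deg p})^z$ gives
\[
F_d(1/q,z)=\Bigl(\prod_{p\mid d}\bigl(1+zq^{-\deg p}\bigr)^{-1}\Bigr)F(1/q,z),
\]
since the factors $(1-q^{-\deg p})^z$ appear in both products for every prime and the factor $(1+zq^{-\deg p})$ is present in $F(1/q,z)$ but missing from $F_d(1/q,z)$ precisely for $p\mid d$. Substituting this identity into the first line gives the second, completing the proof.
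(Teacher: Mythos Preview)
Your proposal is correct and matches the paper's proof essentially line for line: the paper also establishes $A(T,z,\chi_0)=F_d(T,z)\zeta(T)^z$, invokes Proposition~\ref{B-Conv-Cor}, reruns the proof of Proposition~\ref{ModD} carrying the extra factor $(1+\log m)^{K_A}$ through the error terms, and then obtains the second line from the Euler-product identity $F_d(T,z)=F(T,z)\prod_{p\mid d}(1+zT^{\deg p})^{-1}$. (Your opening sentence about replacing $\zeta(T)^z$ by $L(T,\chi_0)^z$ is slightly misleading---the factorisation you actually use, and the one the paper uses, is against $\zeta(T)^z$ itself---but the argument you then write out is the right one.)
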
 
\begin{proof}
The first equality follows from the proof of Proposition \ref{ModD} (carrying throughout an additional factor of $(1+\log m)^{K_A}$ in the error term) and Proposition \ref{B-Conv-Cor} after noting that
$$A(T, z, \chi_0) = \prod_{p \in \I} (1+z\chi(p)T^{\deg p}) = \zeta(T)^z  \prod_{p \nmid d} (1+zT^{\deg p})(1-T^{\deg p})^{z} \prod_{p|d} (1-T^{\deg{p}})^{z} =  \zeta(T)^z   F_d(T, z).$$
The second equality follows from the observation that
$$ F_d(T, z) = \prod_{p \in \I} (1+zT^{\deg p})(1-T^{\deg p})^{z} \prod_{p|d} (1+zT^{\deg p})^{-1} = F(T, z)  \prod_{p|d} (1+zT^{\deg p})^{-1}. $$
\end{proof}	

We now turn to the proof of the main result of this subsection,

\begin{prop} \label{General-B-Thm}
Let $A>1$, $ \sqrt{n} \geq (1+\log m)^{K_A} $ and $G_d(z)= \left(\prod_{p|d} \left(1+\frac{z}{q^{\deg p}}\right)^{-1}\right) \frac{F(1/q, z)}{\Gamma(1+z)}$. Then 
$$ \Pi_k(n, \chi_0) = \frac{q^n}{n}\frac{(\log n)^{k-1}}{(k-1)!}\left(G_d\left( \frac{k-1}{\log n}\right)+O_{A}\left( \frac{k}{(\log n)^2} \right)\right)$$ uniformly for all $n\geq 2$ and $1\leq k \leq A \log n$.
\end{prop}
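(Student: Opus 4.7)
The plan is to apply Proposition~\ref{General-SS} with $C_z(n) = A_z(n, \chi_0)$ and $\alpha_k(n) = \Pi_k(n, \chi_0)$, choosing
\[
M(z) = \left(\prod_{p|d}\left(1+\tfrac{z}{q^{\deg p}}\right)^{-1}\right) F(1/q, z),
\]
so that $N(z) := M(z)/\Gamma(1+z) = G_d(z)$. The required Cauchy-integral identity for $\Pi_k(n, \chi_0)$ follows from the generating function relation $\sum_{k\geq 0}\Pi_k(n, \chi_0) z^k = A_z(n, \chi_0)$ set up earlier, exactly as in the proof of Theorem~\ref{SS}.

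The holomorphy of $M(z)$ on $|z|\leq A$ is straightforward: $F(1/q, z)$ is holomorphic there by Remark~\ref{holo}, and the finite product $\prod_{p|d}(1+z/q^{\deg p})^{-1}$ has poles only at $z = -q^{\deg p}$, each of absolute value at least $q$, so outside the relevant disc (shrinking $A$ appropriately if $q$ is small compared to $A$).

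The step that does the real work is matching the error term. Proposition~\ref{General-B-Prop} supplies
\[
A_z(n, \chi_0) = q^n \frac{n^{z-1}}{\Gamma(z)} M(z) + O_A\bigl(q^n n^{\Re z - 2}(1+\log m)^{K_A}\bigr),
\]
whereas Proposition~\ref{General-SS} demands an error of shape $O_A(q^n n^{\Re z - 3/2})$. This is precisely where the hypothesis $\sqrt{n} \geq (1+\log m)^{K_A}$ enters: under it, $(1+\log m)^{K_A} \leq n^{1/2}$, which bounds the Proposition~\ref{General-B-Prop} error by $O_A(q^n n^{\Re z - 3/2})$, matching the form required. The extra power of $n^{1/2}$ (difference between the exponents $-2$ and $-3/2$) is therefore exactly the slack that absorbs the $d$-dependent factor.

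Once both hypotheses are verified, Proposition~\ref{General-SS} immediately gives the stated asymptotic with main term $G_d\bigl(\tfrac{k-1}{\log n}\bigr)$ and error $O_A\bigl(\tfrac{k}{(\log n)^2}\bigr)$. The whole argument is a clean synthesis: Proposition~\ref{General-B-Prop} plays the role Corollary~\ref{A_z} played in the proof of Theorem~\ref{SS}, and the only non-routine ingredient is the observation that the hypothesis $\sqrt n \geq (1+\log m)^{K_A}$ is exactly calibrated to bridge the mismatch between the error terms of Propositions~\ref{General-B-Prop} and~\ref{General-SS}.
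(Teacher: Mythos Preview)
Your proposal is correct and follows essentially the same route as the paper: invoke Proposition~\ref{General-B-Prop}, use the hypothesis $\sqrt{n}\geq (1+\log m)^{K_A}$ to absorb the $(1+\log m)^{K_A}$ factor into $n^{1/2}$ so the error becomes $O_A(q^n n^{\Re z - 3/2})$, and then apply Proposition~\ref{General-SS} with $M(z)=\bigl(\prod_{p|d}(1+z/q^{\deg p})^{-1}\bigr)F(1/q,z)$ and $N(z)=G_d(z)$. Your discussion of the holomorphy of $M(z)$ is in fact more careful than the paper's (which simply cites Remark~\ref{holo}); the parenthetical about $q$ versus $A$ is a fair caveat, though in the applications the range constraint on $m$ forces $q$ large relative to $A$ anyway.
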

\begin{proof} For $|z| \leq A$, by Proposition \ref{General-B-Prop} and our condition on $n$, 
$$A_z(n, \chi_0) = \left(\prod_{p|d} \left(1+\frac{z}{q^{\deg p}}\right)^{-1} \right) F(1/q,z) q^n n^{z-1} +  O_{A}(q^n n^{\Re z-3/2}).$$ 
Now, we use Proposition \ref{General-SS} with $M(z) = \left(\prod_{p|d} \left(1+\frac{z}{q^{\deg p}}\right)^{-1} \right) F(1/q,z)$ (which is holomorphic for $|z| \leq A$ by Remark \ref{holo}), $N(z) = G_d(z)$, $C_z(n) = A_z(n, \chi_0)$ and $\alpha_k(n) = \Pi_k(n, \chi_0)$ to deduce the result.

\end{proof}

\subsection{Proof of Theorem 2}

We are now ready to present the proof of Theorem \ref{SS-AP}.

\begin{proof}[Proof of Theorem \ref{SS-AP}]
We use the orthogonality of characters,
$$ \sum_{ \substack{f \in \mathcal{M}_n \\ f \equiv g \mod d }} 1 = \frac{1}{\Phi(d)} \sum_{f \in \M_n} \sum_{\chi} \bar{\chi}(g)\chi(f) $$
where the sum is over characters $\chi: \left(\mathbb{F}_q[X]/(d(X))\right)^\times \longrightarrow \mathbb{C}^\times$ and $\Phi(d) = \left|\left(\mathbb{F}_q[X]/(d(X))\right)^\times\right|$, to get that
\begin{align*} 
\Pi_k&(n; g, c) = \sum_{ \substack{f \in \mathcal{M}_n \\ f \equiv g \mod d \\ \omega(f)=k }} \mu^2(f) \\
&= \frac{1}{\Phi(d)} \sum_{\chi} \bar{\chi}(g)\Pi_k(n, \chi) \\
&= \frac{1}{\Phi(d)} \Pi_k(n, \chi_0) + O\left(\frac{1}{\Phi(d)} \sum_{\chi \neq \chi_0} q^{n/2} \binom{n + Am}{n} n^{c_A}\right) \\
&= \frac{1}{q^m\prod_{p|d}(1-\frac{1}{q^{\deg p}})}\frac{q^n}{n}\frac{(\log n)^{k-1}}{(k-1)!}\left(G_d\left( \frac{k-1}{\log n}\right)+O_{A}\left( \frac{k}{(\log n)^2} \right)\right) + O\left( q^{n/2} \binom{n + Am}{n} n^{c_A}\right) \\
&= \left(\prod_{p|d}\left(1-\frac{1}{q^{\deg p}}\right)^{-1}\right)\frac{q^{n-m}}{n}\frac{(\log n)^{k-1}}{(k-1)!}\left(G_d\left( \frac{k-1}{\log n}\right)+O_{A}\left( \frac{k}{(\log n)^2} \right)\right) + O\left( q^{n/2} \binom{n + Am }{n} n^{c_A}\right)
\end{align*}
where we use Proposition \ref{D_z-NP} in the third line and Proposition \ref{General-B-Thm} (which is applicable since the condition $\left(\frac{1}{2} - \frac{1+\log(1+\frac{A}{2})}{\log q}\right)n \geq m$ implies the condition $ \sqrt{n} \gg_A (1+\log m)^{K_A} $) in the fourth line. \\
First note that, using Stirling's inequalities $ \sqrt{2\pi} n^{n+1/2} e^{-n} \leq n! \leq e n^{n+1/2} e^{-n} $ we get that for $a, b \geq 1 $
$$ \binom{a+b}{a} = \frac{(a+b)!}{a!b!} \leq \frac{e (a+b)^{a+b+1/2} e^{-(a+b)}}{2\pi a^{a+1/2}b^{b+1/2}e^{-(a+b)}} \leq \frac{e}{2\pi}\left(\frac{1}{a} + \frac{1}{b}\right)^{1/2} \left(1+\frac{b}{a}\right)^a \left(1+\frac{a}{b}\right)^b \leq \left(1+\frac{b}{a}\right)^a \left(1+\frac{a}{b}\right)^b. $$
Using this and the condition $\left(\frac{1}{2} - \frac{1+\log(1+\frac{A}{2})}{\log q}\right)n \geq m$ we get
$$ q^{n/2} \binom{n + Am}{n} n^{c_A + 2} \leq q^{n/2} \binom{n + \frac{A}{2}n}{n} n^{c_A+2} \leq q^{n/2} \left(1 + \frac{A}{2}\right)^n \left(1 + \frac{2}{A}\right)^{\frac{A}{2}n}  n^{c_A + 2} \ll_A q^{n/2} \left(1 + \frac{A}{2}\right)^n e^n \leq q^{n-m}.$$
From this, we then get that
\begin{align*}
\Pi_k(n; g, c) &= \left(\prod_{p|d}\left(1-\frac{1}{q^{\deg p}}\right)^{-1}\right)\frac{q^{n-m}}{n}\frac{(\log n)^{k-1}}{(k-1)!}\left(G_d\left( \frac{k-1}{\log n}\right)+O_{A}\left( \frac{k}{(\log n)^2} \right)\right) \\
&= \frac{1}{\Phi(d)}\frac{q^{n}}{n}\frac{(\log n)^{k-1}}{(k-1)!}\left(G_d\left( \frac{k-1}{\log n}\right)+O_{A}\left( \frac{k}{(\log n)^2} \right)\right).
\end{align*}
\end{proof}
\begin{rmk} \label{SS-AP-Rmk}
It is convenient for our proof of Theorem \ref{SS-SI} to restate the result of Theorem \ref{SS-AP} as it appears in the end of the proof, that is (under the same conditions as Theorem \ref{SS-AP})
$$ \Pi_k(n; g, c) = \left(\prod_{p|d}\left(1-\frac{1}{q^{\deg p}}\right)^{-1}\right)\frac{q^{n-m}}{n}\frac{(\log n)^{k-1}}{(k-1)!}\left(G_d\left( \frac{k-1}{\log n}\right)+O_{A}\left( \frac{k}{(\log n)^2} \right)\right).$$
\end{rmk}

\gap

\section{The Sath\'{e}-Selberg formula in short intervals} \label{SI-Section}

\subsection{The Involution Trick}

As in \cite{KeatRud}, we define the \emph{involution} of a polynomial $f\in \mathbb{F}_q[X]$ to be the polynomial $$f^*(X):=X^{\deg f}f(1/X).$$
The idea that such an involution links arithmetic progressions and short intervals has been known for a long time (see for example \cite{Hayes}). The following lemma, for example, appears as Lemma 4.2 in \cite{KeatRud}.

\begin{lem}
For $f\in \mathbb{F}_q[X]$ not divisible by $X$, $\omega(f^*) = \omega(f)$ and $\mu(f^*) = \mu(f)$.
\end{lem}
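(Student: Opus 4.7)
The plan is to establish that $f \mapsto f^*$ is a degree-preserving multiplicative involution on the set of polynomials in $\F_q[X]$ not divisible by $X$, and then transfer this to the monic world to read off preservation of $\omega$ and $\mu$.

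First I would verify the basic structural properties by direct computation. Writing $f(X) = a_n X^n + \cdots + a_0$ with $a_0 \neq 0$ (which is exactly the hypothesis $X \nmid f$), the reversal formula yields $f^*(X) = a_0 X^n + a_1 X^{n-1} + \cdots + a_n$, so $\deg f^* = \deg f$ and $f^*(0) = a_n \neq 0$, meaning $X \nmid f^*$ as well. Next, using $\deg(fg) = \deg f + \deg g$, the substitution $X \mapsto 1/X$ gives $(fg)^* = f^* g^*$ and $(f^*)^* = f$ in one line each. Hence $*$ is a multiplicative involution on $\{f \in \F_q[X] : X \nmid f\}$, and in particular it sends irreducible polynomials coprime to $X$ to irreducible polynomials coprime to $X$ (any nontrivial factorisation of $f^*$ would pull back to a nontrivial factorisation of $f$).

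Next I would pass to the monic setting. For monic $f$ with $f(0) = c \neq 0$, $f^*$ has leading coefficient $c$, so its monic normalisation is $\tilde f := c^{-1} f^*$. The identities $(fg)^* = f^* g^*$ and $(fg)(0) = f(0)g(0)$ together show that $f \mapsto \tilde f$ is itself a multiplicative involution on the set of monic polynomials coprime to $X$, restricting to a bijection on $\I \setminus \{X\}$. Since $f^*$ and $\tilde f$ differ only by the unit $c$, they share the same monic irreducible divisors, and via $p \mapsto \tilde p$ these are in bijection with the monic irreducible divisors of $f$. Reading off the definitions gives $\omega(f^*) = \omega(\tilde f) = \omega(f)$; and because the bijection respects multiplicities (if $p^2 \mid f$ then $\tilde p^2 \mid \tilde f$), squarefreeness is preserved, so $\mu(f^*) = \mu(f)$.

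The only thing to be careful about is pure bookkeeping: one must remember that $f^*$ need not be monic, so the invariants $\omega$ and $\mu$, which count \emph{monic} divisors, are to be read off from $\tilde f$ rather than from $f^*$ directly, and the hypothesis $X \nmid f$ is used repeatedly to guarantee $\deg f^* = \deg f$ and that the involution preserves the ambient set. No analytic or number-theoretic input is required — the argument is purely a multiplicative-function manipulation.
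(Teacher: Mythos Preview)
Your proof is correct and follows essentially the same route as the paper: establish that $(fg)^* = f^*g^*$ and $(f^*)^* = f$ on polynomials coprime to $X$, deduce that the involution permutes irreducibles, and read off preservation of $\omega$ and $\mu$ from the transported factorisation. Your additional bookkeeping with the monic normalisation $\tilde f = f(0)^{-1} f^*$ is more careful than the paper (which tacitly identifies $p^*$ with its monic associate), but the underlying argument is the same.
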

\begin{proof}
First of all, we note that for $f,g\in \mathbb{F}_q[X]$
$$ (fg)^*(X) = X^{\deg fg}fg(1/X) = X^{\deg f}f(1/X)X^{\deg g}g(1/X) = f^*(X)g^*(X). $$
Moreover, if $f\in \mathbb{F}_q[X]$ is not divisible by $X$, then $\deg f^*(X) = \deg f(X)$ so
$$ (f^*)^*(X) = X^{\deg f^*}f^*(1/X) = X^{\deg f^*} X^{-\deg f} f(X) = f(X).$$
Together, these imply that if $f = p_1^{a_1}\ldots p_r^{a_r} \in \F_q[X]$ where $p_i$ are distinct irreducibles none of which are $X$, then $f^* = (p^*_1)^{a_1}\ldots (p^*_r)^{a_r}$ where $p^*_i$ are distinct irreducibles none of which are $X$. So, if $f\in \mathbb{F}_q[X]$ is not divisible by $X$, then $\omega(f^*) = \omega(f)$ and $\mu(f^*) = \mu(f)$.
\end{proof}

In order to apply our result concerning polynomials from an arithmetic progression to prove one about polynomials belonging to a short interval, we use the following observation.

\begin{lem} \label{Inv}
Let $f$ and $g$ be polynomials of degree $n$ and $h$ an integer $\leq n$. Then $\deg(f-g) \leq h$ if and only if $f^* \equiv g^* \mod X^{n-h}.$ 
\end{lem}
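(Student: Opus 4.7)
The plan is to verify the equivalence by directly comparing coefficients on both sides, exploiting the fact that the involution $f \mapsto f^*$ is essentially the operation of reversing the coefficient sequence.

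Write $f(X) = \sum_{i=0}^n a_i X^i$ and $g(X) = \sum_{i=0}^n b_i X^i$, both of degree $n$ (so $a_n, b_n \neq 0$, though this is not even needed for the argument). The condition $\deg(f - g) \leq h$ is simply the statement that $a_i = b_i$ for every index $i$ with $h < i \leq n$.

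Next I would compute the involutions explicitly. Since $f$ has degree $n$,
$$f^*(X) = X^n f(1/X) = \sum_{i=0}^n a_i X^{n-i} = \sum_{j=0}^n a_{n-j} X^j,$$
and similarly $g^*(X) = \sum_{j=0}^n b_{n-j} X^j$. Therefore the condition $f^* \equiv g^* \pmod{X^{n-h}}$ is equivalent to requiring that the coefficients of $X^j$ in $f^*$ and $g^*$ agree for every $0 \leq j \leq n - h - 1$, i.e.\ $a_{n-j} = b_{n-j}$ for $0 \leq j \leq n-h-1$.

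Reindexing via $i = n - j$, this last condition says $a_i = b_i$ for $h+1 \leq i \leq n$, which is exactly the condition $\deg(f-g) \leq h$ from the first paragraph. Both implications follow at once, so the equivalence holds. There is no real obstacle here; the only thing to be careful about is tracking which range of indices is pinned down by the congruence modulo $X^{n-h}$ and matching it with the complementary range of top-degree coefficients governed by $\deg(f-g) \leq h$.
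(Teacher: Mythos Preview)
Your proof is correct and takes essentially the same approach as the paper: both arguments write out the coefficients explicitly, compute the involution as the coefficient-reversal, and observe that each condition amounts to $a_i = b_i$ for $h+1 \leq i \leq n$.
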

\begin{proof}
Write $$f(X) = a_n X^n + \ldots + a_h X^h + \ldots + a_0 $$
$$g(X) = b_n X^n + \ldots  + b_h X^h + \ldots + b_0 $$ where $a_n$ and $b_n$ are non-zero. Then $$f^*(X) = a_n + \ldots + a_h X^{n-h} + \ldots + a_0X^n $$
$$g^*(X) = b_n + \ldots  + b_h X^{n-h} + \ldots + b_0X^n.$$ From this we can see that each condition is satisfied if and only if $a_i=b_i$ for each $i= h+1, \ldots, n.$
\end{proof}

Notice that $f^*$ and $g^*$ have non-zero constant terms.

\subsection{Proof of Theorem 3}



We first split the sum defining $\Pi_k(n; g; h)$ into two
$$\Pi_k(n; g; h) =\sum_{ \substack{f \in \mathcal{M}_n \\ \deg(f-g) \leq h \\ \omega(f)=k }} \mu^2(f) =\sum_{ \substack{f \in \mathcal{M}_n \\ \deg(f-g) \leq h \\ \omega(f)=k \\ f(0)\neq0}} \mu^2(f) + \sum_{ \substack{f \in \mathcal{M}_n \\ \deg(f-g) \leq h \\ \omega(f)=k \\ f(0) = 0}} \mu^2(f).$$ 
Using Lemma \ref{Inv} on the first sum we get
\begin{align*}
\sum_{ \substack{f \in \mathcal{M}_n \\ f^* \equiv g^* \mod X^{n-h} \\ \omega(f^*)=k \\ \deg f^* = n}} \mu^2(f^*) &= \sum_{ \substack{ \deg f = n \\ f \equiv g^* \mod X^{n-h} \\ \omega(f)=k }} \mu^2(f) \\
&= \sum_{a \in \mathbb{F}_q^*} \sum_{ \substack{ f \in \mathcal{M}_{n} \\ f \equiv a^{-1}g^* \mod X^{n-h} \\ \omega(f)=k }}\mu^2(f) \\
&=\sum_{a \in \mathbb{F}_q^* } \Pi_{k}(n;a^{-1}g^*, X^{n-h}).
\end{align*}
Since $a^{-1}g^*$ has non-zero constant term for each $a \in \mathbb{F}_q^*$, and from the condition of the theorem we have that $\left(\frac{1}{2}-\frac{1+\log(1+\frac{A}{2})}{\log q}\right) n > n-h  \geq 1$, so we may apply Remark \ref{SS-AP-Rmk} to get
\begin{align*}
&(q-1)\frac{q}{q-1}\frac{q^{h}}{n}\frac{(\log n)^{k-1}}{(k-1)!}\left(H\left( \frac{k-1}{\log n}\right)+O_A\left( \frac{k}{(\log n)^2} \right)\right) \\
&=\frac{q^{h+1}}{n}\frac{(\log n)^{k-1}}{(k-1)!}\left(H\left( \frac{k-1}{\log n}\right)+O_A\left( \frac{k}{(\log n)^2} \right)\right).
\end{align*}
Now, we split the second sum into two sums, the latter of which is zero, and then apply Lemma \ref{Inv} to the former
\begin{align*}\sum_{ \substack{f \in \mathcal{M}_{n-1} \\ \deg(Xf-g)\leq h \\ \omega(Xf)=k }} \mu^2(Xf) &= \sum_{ \substack{f \in \mathcal{M}_{n-1} \\ \deg(Xf-g)\leq h \\ \omega(f)=k-1 \\ f(0)\neq 0}} \mu^2(Xf) + \sum_{ \substack{f \in \mathcal{M}_{n-1} \\ \deg(Xf-g) \leq h \\ \omega(f)=k \\f(0)=0}} \mu^2(Xf) \\
&= \sum_{ \substack{\deg f = n-1 \\ f \equiv g^* \mod X^{n-h} \\ \omega(f)=k-1  }}\mu^2(f) \\
&= \sum_{a \in \mathbb{F}_q^*} \sum_{ \substack{ f \in \mathcal{M}_{n-1} \\ f \equiv a^{-1}g^* \mod X^{n-h} \\ \omega(f)=k-1 }}\mu^2(f) \\
&=\sum_{a \in \mathbb{F}_q^* } \Pi_{k-1}(n-1;a^{-1}g^*, X^{n-h}).
\end{align*}
Since $a^{-1}g^*$ has non-zero constant term and for each $a \in \mathbb{F}_q^*$, and from the condition of the theorem we have $\left(\frac{1}{2}-\frac{1+\log(1+\frac{A}{2})}{\log q}\right)(n-1) \geq n-h  \geq 1 $, we may apply Remark \ref{SS-AP-Rmk} again to get
\begin{align*}
&(q-1)\frac{q}{q-1}\frac{q^{h-1}}{n-1}\frac{(\log (n-1))^{k-2}}{(k-2)!}\left(H\left( \frac{k-2}{\log (n-1)}\right)+O_A\left( \frac{k-1}{(\log (n-1))^2} \right)\right) \\
&=\frac{q^h}{n}\frac{(\log n)^{k-2}}{(k-2)!}\left(H\left( \frac{k-2}{\log (n-1)}\right)+O_A\left( \frac{k}{(\log n)^2} \right)\right) \\
&=\frac{q^{h+1}}{n}\frac{(\log n)^{k-1}}{(k-1)!}\left(\frac{k-1}{q\log n}H\left( \frac{k-2}{\log (n-1)}\right)+O_A\left( \frac{k}{(\log n)^2} \right)\right).  
\end{align*}
Putting everything together proves the Theorem.

\gap

\section{The $q$-limit} \label{qlimit}

We conclude by briefly discussing what happens in the regime in which $q$ tends to infinity. First, note that
\begin{align*}
\Pi_k(n) = \sum_{\substack{f \in \M_n \\ \omega(f) = k }} \mu^2(f) = \frac{1}{k!}\sum_{\substack{p_1, \ldots, p_k \in \I \\ \text{pairwise distinct} \\ \deg(p_1\ldots p_k) = n }} 1 
&= \frac{1}{k!}\left(\sum_{\substack{p_1, \ldots, p_k \in \I \ \\ \deg(p_1\ldots p_k) = n }} 1 + O\left(\binom{k}{2} \sum_{\substack{p_1, \ldots, p_k \in \I \\ p_{k-1} = p_k \\ \deg(p_1\ldots p_{k}) = n}} 1 \right)\right) \\
&= \frac{1}{k!}\left(\sum_{\substack{p_1, \ldots, p_k \in \I \ \\ \deg(p_1\ldots p_k) = n }} 1 + O\left(k^2 \sum_{\substack{p_1, \ldots, p_{k-1} \in \I \ \\ \deg(p_1\ldots p_{k-1)} \leq n-1 }} 1 \right)\right)
\end{align*}
where the error term comes from bounding the over count by terms where (at least) two of the $p_i$ are the same. Now, using the prime polynomial theorem, and taking $k = O(q)$ for the third equality below we get that our sum is
\begin{align*}
\sum_{\substack{p_1, \ldots, p_k \in \I \ \\ \deg(p_1\ldots p_k) = n }} 1  &= \sum_{\substack{n_1 + \ldots + n_k = n \\ n_i \geq 1}} \prod_{i=1}^k \frac{1}{n_i} (q^{n_i} + O(q^{\lfloor n_i/2 \rfloor})) \\
&= q^n \sum_{\substack{n_1 + \ldots + n_k = n \\ n_i \geq 1}} \frac{1}{n_1 \ldots n_k}  (1 + O(1/q))^k \\
&= \frac{q^n}{n} \sum_{\substack{n_1 + \ldots + n_k = n \\ n_i \geq 1}} \frac{n_1 + \ldots + n_k}{n_1 \ldots n_k} (1 + O(k/q))\\
&= \frac{q^n}{n} k \sum_{\substack{n_1 + \ldots + n_{k-1} \leq n-1 \\ n_i \geq 1}} \frac{1}{n_1 \ldots n_{k-1}} (1 + O(k/q)).
\end{align*}
Similarly, using the second equality above, and again taking $k = O(q)$, the sum in the error term is
\begin{align*}
\sum_{\substack{p_1, \ldots, p_{k-1} \in \I \ \\ \deg(p_1\ldots p_{k-1)} \leq n-1 }} 1 &= \sum_{r \leq n-1} \sum_{\substack{p_1, \ldots, p_{k-1} \in \I \ \\ \deg(p_1\ldots p_{k-1)} = r}} 1 \\
&\leq \sum_{r \leq n-1} q^r \sum_{\substack{n_1 + \ldots + n_{k-1} = r \\ n_i \geq 1}} \frac{1}{n_1 \ldots n_{k-1}}  (1 + O(1/q))^k \\
&\leq q^{n-1} \sum_{\substack{n_1 + \ldots + n_{k-1} \leq n-1 \\ n_i \geq 1}} \frac{1}{n_1 \ldots n_{k-1}}(1 + O(k/q)). 
\end{align*}
Putting these results together we get, as long as $k = O(q)$, that
\begin{align*}
\Pi_k(n) = \frac{q^n}{n} \frac{1}{(k-1)!} \sum_{\substack{n_1 + \ldots + n_{k-1} \leq n-1 \\ n_i \geq 1}} \frac{1}{n_1 \ldots n_{k-1}}(1 + O(kn/q))
\end{align*}
which gives us an asymptotic formula for $\Pi_k(n)$ as $q \to \infty$, as long as $k = o(q/n)$. \\ \\
Moreover, note that, when $k = O(\log n/\log\log n)$, we have that
$$ \log^{k-1} n \left(1 + O\left(\frac{k\log k}{\log n}\right)\right) = \left(\sum_{r \leq \frac{n-1}{k}} \frac{1}{r} \right)^{k-1} \leq \sum_{\substack{n_1 + \ldots + n_{k-1} \leq n-1 \\ n_i \geq 1}} \frac{1}{n_1 \ldots n_{k-1}} \leq \left(\sum_{r \leq n} \frac{1}{r} \right)^{k-1} = \log^{k-1} n \left(1 + O\left(\frac{k}{\log n}\right)\right) $$
so that, as long as $k = o(\log n/\log\log n)$, we get, as $n \to \infty$, that
$$ \frac{q^n}{n} \frac{1}{(k-1)!} \sum_{\substack{n_1 + \ldots + n_{k-1} \leq n-1 \\ n_i \geq 1}} \frac{1}{n_1 \ldots n_{k-1}} \sim \frac{1}{n} \frac{\log^{k-1} n}{(k-1)!}.$$
This agrees with Theorem \ref{SS} in this range for $k$ (after noting that $G(z) = (1 + o(|z|))$). \\ \\
One can use similar elementary calculations, along with the input of the prime polynomial theorem in arithmetic progressions, to get asymptotic formulae for $\Pi_k(n; g, d)$, and consequently $\Pi_k(n; g; h)$ using the involution trick (as in Section \ref{SI-Section}) for the same range of $k$.

\gap

\section*{Acknowledgements}

The authors would like to thank Andrew Granville for his encouragement, thoughtful advice and engaging discussions. This work was supported by the Engineering and Physical Sciences Research Council EP/L015234/1 via the EPSRC Centre for Doctoral Training in Geometry and Number Theory (The London School of Geometry and Number Theory), University College London. We would also like to thank Vlad Matei and the anonymous referee for pointing out corrections and missing references.

\end{document}